\numberwithin{equation}{section}
\newtheorem{theorem}{Theorem}[section]
\newtheorem{proposition}[theorem]{Proposition}
\newtheorem{lemma}[theorem]{Lemma}
\newtheorem{corollary}[theorem]{Corollary}
\newtheorem{definition}[theorem]{Definition}
\theoremstyle{definition}
\newtheorem{remark}[theorem]{Remark}
\newcommand{\ol}[1]{\overline{#1}}
\newcommand{\too}{\longrightarrow}
\newcommand{\xto}[1]{\xrightarrow{#1}}
\newcommand{\HH}{\ensuremath{\mathcal{H}}}
\newcommand{\mm}{\ensuremath{\mathfrak{m}}}
\newcommand{\MM}{\ensuremath{\mathfrak{M}}}
\newcommand{\pp}{\ensuremath{\mathfrak{P}}}
\newcommand{\fg}{\MM}
\newcommand{\pry}{\pp}
\DeclareMathOperator{\PF}{\mathcal{PF}}%
\DeclareMathOperator{\PSF}{\mathcal{PSF}}%
\DeclareMathOperator{\GL}{GL}%
\DeclareMathOperator{\Aut}{Aut}%
\DeclareMathOperator{\Max}{Max}%
\newfont{\fuentea}{cmsy10 at 16pt}
\newfont{\fuenteb}{cmsy10 at 10pt}
\DeclareFixedFont{\zcal}{OT1}{pzc}{m}{sl}{16pt}%
\DeclareFixedFont{\titulo}{OT1}{pbk}{bx}{sc}{30pt}%
\DeclareFixedFont{\trm}{OT1}{ptm}{b}{sc}{12pt}%
\DeclareFixedFont{\trw}{OT1}{phv}{b}{bx}{14pt}%
\def\@roman#1{\romannumeral #1}
\title{\textbf{Extended modules and Ore extensions}}
\author{Vyacheslav Artamonov\\
\texttt{viacheslav.artamonov@gmail.com}\\
Moscow State University\\
Oswaldo Lezama\\
\texttt{jolezamas@unal.edu.co}\\
Universidad Nacional de Colombia\\
William Fajardo\\
\texttt{wafajardoc@unal.edu.co}\\
Universidad Nacional de Colombia}
\date{}
\begin{document}
\maketitle
\begin{abstract}
\noindent In this paper we investigate extended modules for a special class of Ore extensions. We
will assume that $R$ is a ring and $A$ will denote the Ore extension $A:=R[x_1,\dots,x_n;\sigma]$
for which $\sigma$ is an automorphism of $R$, $x_ix_j=x_jx_i$ and $x_ir=\sigma(r)x_i$, for every
$1\leq i,j\leq n$. With some extra conditions over the ring $R$, we will prove Vaserstein's,
Quillen's patching,
Horrocks' and Quillen-Suslin's theorems for this type of
non-commutative rings.

\bigskip

\noindent \textit{Key words and phrases.} Extended modules and rings, Quillen-Suslin's methods, Ore
extensions.

\bigskip

\noindent 2010 \textit{Mathematics Subject Classification.} Primary: 16U20, 16S80. Secondary:
16N60, 16S36.
\end{abstract}
\section{Introduction}\label{definitionexamplesspbw}

The study of finitely generated projective modules over a ring $B$ induces the notions of
$\mathcal{PSF}$, $\mathcal{PF}$, Hermite ($\mathcal{H}$), $d$-Hermite rings, and many other classes
of interesting rings. $B$ is $\mathcal{PSF}$ if any finitely generated projective left $B$-module
is stably free; we say that $B$ is $\mathcal{PF}$ if any finitely generated projective left
$B$-module is free; $B$ is Hermite ($\mathcal{H}$) if any stably free left $B$-module is free. The
ring $B$ is \textit{$d$-Hermite} if any stably free left $B$-module of rank $\geq d$ is free. Note
that $\mathcal{H} \cap  \mathcal{PSF} = \mathcal{PF}$. In this paper we will study the class of
extended modules which is also very useful for the investigation of projective modules. This
special class arises when we try to generalize the famous Quillen-Suslin theorem about projective
modules over polynomial rings with coefficients in $PIDs$ to a wider classes of coefficients, or
yet to the non-commutative rings of polynomial type (see \cite{Artamonov}, \cite{Artamonov2},
\cite{Artamonov3} and \cite{Lam}). We are interested in investigating extended modules and rings
for some special classes of Ore extensions. Thus, if nothing contrary is assumed, we will suppose
that $R$ is a ring and $A$ will denote the Ore extension $A:=R[x_1,\dots,x_n;\sigma]$ for which
$\sigma$ is an automorphism of $R$, $x_ix_j=x_jx_i$ and $x_ir=\sigma(r)x_i$, for every $1\leq
i,j\leq n$. In some places we will assume some extra conditions on $R$.

Some notation is needed as well as to recall some definitions and basic facts. $\langle X\rangle$
will denote the two-sided ideal of $A$ generated by $x_1,\dots,x_n$. Often we will used also the
following notation for $A$, $A=\sigma(R)\langle X\rangle$. An element
$p=c_0+c_1X_1+\cdots+c_tX_t\in A$, with $c_0,c_i\in R$ and $X_i\in Mon(A)$, $1\leq i\leq t$, will
be denoted also as $p=p(X)$, where ${\rm Mon}(A):= \{x_1^{\alpha_1}\cdots x_n^{\alpha_n}\mid
\alpha=(\alpha_1,\dots ,\alpha_n)\in \mathbb{N}^n\}$. The elements of $Mon(A)$ will be represented
by $x^\alpha$ or in capital letters, i.e., $x^\alpha=X$. All modules are left modules if nothing
contrary is assumed. We will use the left notation for homomorphisms and row notation for matrix
representation of homomorphisms between free modules (see Remark 1 in \cite{Gallego3}).

\begin{definition}\label{extendedmodules}
Let $T\supseteq S$ be rings.
\begin{enumerate}
\item[\rm (i)]Let $M$ be a $T$-module, $M$ is extended from $S$ if there exists an $S$-module $M_0$ such that $M\cong
T\otimes_S M_0$. It also says that $M$ is an extension of $M_0$ with respect to $S$.
\item[\rm (ii)]$\fg(T)$ denotes the family of finitely generated $T$-modules,
$\pry(T)$ the family of projective $T$-modules in $\fg(T)$ and $\pry^S(T)$ the family of modules in
$\pry(T)$ extended from $S$.
\item[\rm (iii)]The ring $T$ is extended with respect to $S$, also called $S$-extended, if every finitely generated
projective $T$-module is extended from $S$, i.e., $\pry(T)\subseteq\pry^{S}(T)$. For the Ore
extension $A:=R[x_1,\dots,x_n;\sigma]$, we will say that $A$ is $\mathcal{E}$ if $A$ is
$R$-extended.
\end{enumerate}
\end{definition}

\begin{proposition}[Bass's Theorem]\label{Bass_ideal}
Let $I$ be a two-sided ideal of a ring $R$ such that $I\subseteq Rad (R)$, and let $P,Q$ be
projective $R$-modules. Then, $P\cong Q$ if and only if $P/IP\cong Q/IQ$ as $R/I$-modules. In
particular, $P$ is $R$-free if and only if $P/IP$ is $R/I$-free.
\end{proposition}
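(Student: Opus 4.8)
The plan is to reduce the statement to the classical result that a projective module over a local ring (or more generally, a result controlled by the radical) is determined up to isomorphism by its reduction. First I would establish the "only if" direction, which is trivial: if $P\cong Q$, then tensoring the isomorphism with $R/I$ over $R$ gives $P/IP\cong Q/IQ$ as $R/I$-modules. The substance is in the converse.

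For the "if" direction, suppose $\bar\varphi\colon P/IP\to Q/IQ$ is an $R/I$-isomorphism. The key step is to lift $\bar\varphi$ to an $R$-homomorphism $\varphi\colon P\to Q$. Since $P$ is projective and the canonical map $Q\to Q/IQ$ is surjective, the composite $P\to P/IP\xrightarrow{\bar\varphi} Q/IQ$ lifts through $Q\to Q/IQ$ to some $\varphi\colon P\to Q$. By construction $\varphi$ reduces to $\bar\varphi$ modulo $I$, so $\Coker(\varphi)$ satisfies $\Coker(\varphi)/I\,\Coker(\varphi)=\Coker(\bar\varphi)=0$, i.e. $\Coker(\varphi)=I\,\Coker(\varphi)$. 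Here is where the hypothesis $I\subseteq\Rad(R)$ enters, via Nakayama's lemma: since $Q$ is finitely generated, $\Coker(\varphi)$ is finitely generated, and Nakayama forces $\Coker(\varphi)=0$, so $\varphi$ is surjective. Because $Q$ is projective, the short exact sequence $0\to\Ker(\varphi)\to P\to Q\to 0$ splits, giving $P\cong Q\oplus\Ker(\varphi)$; in particular $\Ker(\varphi)$ is a direct summand of $P$, hence finitely generated. Reducing the split sequence modulo $I$ and using that $\bar\varphi$ is injective shows $\Ker(\varphi)/I\,\Ker(\varphi)=0$, so by Nakayama again $\Ker(\varphi)=0$ and $\varphi$ is an isomorphism.

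One technical point I would be careful about: Nakayama's lemma for noncommutative rings requires $I$ to lie in the Jacobson radical and the module to be finitely generated, both of which hold here, so the argument goes through verbatim in the noncommutative setting — this is important since the paper works with Ore extensions. I would also note that $\Ker(\varphi)$ being finitely generated is not automatic from $P$ being finitely generated alone (submodules of f.g. modules over noncommutative rings need not be f.g.), which is precisely why splitting off $\Ker(\varphi)$ as a summand of $P$ is the right move rather than arguing directly.

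The "in particular" clause follows by specializing: take $Q=R^n$ a free module of the appropriate rank. If $P/IP$ is $R/I$-free of rank $n$, then $P/IP\cong (R/I)^n=R^n/IR^n$, so the general statement gives $P\cong R^n$; the converse is immediate by reduction. The main obstacle, such as it is, is simply making sure the lifting-and-Nakayama argument is stated cleanly for one-sided modules over a noncommutative ring; there is no deep difficulty, but one must resist the temptation to invoke commutative-algebra shortcuts (like localization) that are unavailable here.
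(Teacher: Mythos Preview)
The paper gives no proof of its own; it simply refers to \cite{Bass1}, Lemma~2.4. Your lifting-plus-Nakayama argument is the standard one and is correct \emph{when $P$ and $Q$ are finitely generated}, which is the only case the paper actually invokes (see the proof of Proposition~\ref{913}(iv)).

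There is, however, a genuine gap relative to the proposition as stated: nowhere does the statement assume finite generation, yet you write ``since $Q$ is finitely generated, $\Coker(\varphi)$ is finitely generated'' and then apply Nakayama. Without that hypothesis the argument breaks, and not for a superficial reason: an arbitrary lift of an isomorphism need not be surjective. For instance, take $R=k[[t]]$, $I=(t)=\Rad(R)$, $P=Q=R^{(\mathbb N)}$ with basis $(e_i)_{i\ge 0}$, and $\varphi(e_i)=e_i-te_{i+1}$; then $\varphi$ reduces to the identity modulo $I$, but $e_0\notin\varphi(P)$. So neither your Nakayama step on $\Coker(\varphi)$ nor the variant ``$\varphi(P)+IQ=Q$ with $IQ$ superfluous in $Q$'' is available in the non-finitely-generated setting. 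If you intend to prove the proposition in the generality in which it is stated, a different idea is required and you should consult Bass's original argument; if you are content with the finitely generated case, say so explicitly, since that is all the paper needs.
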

\begin{proof}
See \cite{Bass1}, Lemma 2.4.
\end{proof}

\begin{proposition}\label{913}
Let $T\supseteq S\supseteq R$ be rings and $M$ a $T$-module.
\begin{enumerate}
\item[\rm (i)]If $M$ is an extension of $M_0$ with respect to $S$ and $M_0$ is an extension of $L_0$ with
respect to $R$, then $M$ is an extension of $L_0$ with respect to $R$.
\item[\rm (ii)]If $T$ is $R$-extended, then $T$ is $S$-extended.
\item[\rm (iii)] Let $I$ be a proper two-sided ideal of
$T$, with $S\cong T/I$. If $T$ is $\PF$ then $S$ is $\PF$.
\item[\rm (iv)] Let $J$ be a two-sided ideal of $R$ such that $J\subseteq Rad(R)$.
If $R/J$ is $\PF$, then $R$ is $\PF$.
\end{enumerate}
\end{proposition}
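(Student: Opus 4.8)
All four parts are formal consequences of the behaviour of (finitely generated projective) modules under base change along ring homomorphisms, with part (iv) also invoking Bass's theorem (Proposition~\ref{Bass_ideal}); I would take them in order. For (i), unwind the definitions as $M\cong T\otimes_S M_0$ and $M_0\cong S\otimes_R L_0$, then substitute and use associativity of the tensor product together with the canonical isomorphism of $(T,R)$-bimodules $T\otimes_S S\cong T$ (the right $R$-action on $T$ coming from $R\subseteq S\subseteq T$) to obtain $M\cong T\otimes_S(S\otimes_R L_0)\cong (T\otimes_S S)\otimes_R L_0\cong T\otimes_R L_0$; hence $M$ is an extension of $L_0$ with respect to $R$. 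For (ii), let $M$ be a finitely generated projective $T$-module; since $T$ is $R$-extended, $M\cong T\otimes_R M_0$ for some $R$-module $M_0$, and then $N_0:=S\otimes_R M_0$ is an $S$-module with $M\cong T\otimes_S N_0$ by the same associativity identity, so $M\in\pry^S(T)$ and $T$ is $S$-extended.

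For (iii), the key point to notice is that the hypothesis should be read so that the quotient map $T\to T/I$ restricts to an isomorphism $S\xrightarrow{\ \sim\ }T/I$; equivalently, there are ring homomorphisms $\iota\colon S\hookrightarrow T$ and $\pi\colon T\to S$ with $\ker\pi=I$ and $\pi\circ\iota=\mathrm{id}_S$, i.e.\ a ring retraction of $T$ onto $S$. Given a finitely generated projective $S$-module $P$, choose $P'$ with $P\oplus P'\cong S^n$; applying $T\otimes_S-$ shows $(T\otimes_S P)\oplus(T\otimes_S P')\cong T^n$, so $T\otimes_S P$ is finitely generated projective over $T$ and hence free, say $T\otimes_S P\cong T^r$, because $T$ is $\PF$. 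Now base change along $\pi$: since $\pi\circ\iota=\mathrm{id}_S$, transitivity of base change gives $S\otimes_T(T\otimes_S P)\cong S\otimes_S P\cong P$, while $S\otimes_T T^r\cong S^r$; therefore $P\cong S^r$ is free and $S$ is $\PF$. The only delicate point is keeping track of the one-sided module structures so that the base change along $\pi$ is legitimate and $S\otimes_T T\cong S$ holds as left $S$-modules, since $R,S,T$ are not assumed commutative.

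For (iv), the statement is immediate from Bass's theorem: if $P$ is a finitely generated projective $R$-module, then $P/JP$ is a finitely generated projective $R/J$-module, hence free when $R/J$ is $\PF$; and since $J\subseteq\Rad(R)$, Proposition~\ref{Bass_ideal} applied to $P$ and the ideal $J$ forces $P$ to be $R$-free, so $R$ is $\PF$. I expect the main obstacle to be part (iii): one has to realize that "$S\cong T/I$ with $S\subseteq T$" is to be used as a \emph{split} surjection $T\twoheadrightarrow S$ (without the splitting the claim fails, e.g.\ $\Z\twoheadrightarrow\Z/6\Z$ with $\Z$ a $\PF$ ring), and then manage the non-commutative bimodule bookkeeping in the extend-and-restrict argument; parts (i), (ii) and (iv) are routine.
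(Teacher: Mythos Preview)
Your proposal is correct and follows essentially the same approach as the paper: parts (i) and (ii) are handled by the associativity identity $T\otimes_S S\cong T$, part (iii) by the extend--restrict argument along the inclusion $S\hookrightarrow T$ and the quotient $T\to T/I\cong S$, and part (iv) by Bass's theorem. Your explicit remark that (iii) requires the composite $S\hookrightarrow T\twoheadrightarrow T/I$ to be the given isomorphism (i.e.\ a ring retraction) makes precise what the paper uses tacitly when it writes $S\otimes_S M\cong (T/I\otimes_T T)\otimes_S M$; the paper's proof is otherwise the same chain of tensor identifications you describe.
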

\begin{proof}
(i) $M\cong T\otimes_S M_0$, $M_0\cong S\otimes_R L_0$, then $M\cong T\otimes_S S\otimes_R L_0\cong
T\otimes_R L_0$.

(ii) $M\cong T\otimes_R M_0\cong (T\otimes_S S)\otimes_R M_0\cong T\otimes_S(S\otimes_R M_0)$.

(iii) Let $M\in\pry(S)$. Then, $M\oplus M'\cong S^r$ for some $S$-module $M'$, therefore
$(T\otimes_SM)\oplus M''\cong T^r$, i.e., $T\otimes_SM\in\pry(T)$, so $T\otimes_SM$ is $T$-free,
and hence $T\otimes_SM\cong T^{\ell}\cong T\otimes_SS^{\ell}$. Whence, $M\cong S\otimes_S
M\cong(T/I\otimes_TT)\otimes_SM\cong T/I\otimes_T(T\otimes_SM) \cong
T/I\otimes_T(T\otimes_SS^{\ell}) \cong (T/I\otimes_TT)\otimes_SS^{\ell} \cong S\otimes_SS^{\ell}
\cong S^{\ell}$. Therefore, $S$ is $\PF$.

(iv) Let $M\in\pry(R)$, then $R/J\otimes_RM\in\pry(R/J)$ so that
\[
M/JM\cong R/J\otimes_RM\cong (R/J)^n\cong R/J\otimes_RR^{n}\cong R^n/JR^n.
\]
From Proposition \ref{Bass_ideal}, $M\cong R^n$, and hence $R$ is $\PF$.
\end{proof}

\section{Extended rings and Ore extensions}

From now on in the present paper (except in the last section) we will assume that $R$ is a
commutative ring and $A$ will denote the Ore extension $A:=R[x_1,\dots,x_n;\sigma]$ for which
$\sigma$ is an automorphism of $R$, $x_ix_j=x_jx_i$ and $x_ir=\sigma(r)x_i$, for every $1\leq
i,j\leq n$.

\begin{proposition}\label{8.1.3}
Let $M$ be an $A$-module. Then,
\begin{enumerate}
\item[\rm (i)]If $M$ is free, then $M$ is extended from $R$.
\item[\rm (ii)]If $M$ is an extension of $M_0$ with respect to $R$, then
\begin{equation}
M_0\cong M/\langle X\rangle M.
\end{equation}
Moreover, if $M$ is finitely generated {\rm(}projective, stably free{\rm)} as $A$-module, then
$M_0$ is finitely generated {\rm(}projective, stably free{\rm)} as $R$-module.
\end{enumerate}
\end{proposition}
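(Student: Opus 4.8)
The plan is to prove (i) first and then (ii), using the natural grading/filtration of $A$ by powers of $\langle X\rangle$ together with the identification $A/\langle X\rangle\cong R$.

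For part (i): if $M$ is free, say $M\cong A^{(J)}$ for some index set $J$, then taking $M_0:=R^{(J)}$ as an $R$-module, one has $A\otimes_R R^{(J)}\cong A^{(J)}\cong M$ since tensor product commutes with direct sums. Hence $M$ is extended from $R$. This is essentially immediate, so I would state it in one or two lines.

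For part (ii): suppose $M\cong A\otimes_R M_0$. First I would establish the isomorphism $M_0\cong M/\langle X\rangle M$. Using the identification $M\cong A\otimes_R M_0$, we get $\langle X\rangle M\cong \langle X\rangle\otimes_R M_0$ (more carefully, $\langle X\rangle M$ is the image of $\langle X\rangle\otimes_R M_0\to A\otimes_R M_0$), so that
\[
M/\langle X\rangle M\cong (A/\langle X\rangle)\otimes_R M_0\cong R\otimes_R M_0\cong M_0,
\]
where the crucial point is the ring isomorphism $A/\langle X\rangle\cong R$, which holds because $\langle X\rangle$ is generated by $x_1,\dots,x_n$ and in the quotient every $x_i$ becomes $0$, leaving exactly the coefficient ring $R$ — here one uses that $\sigma$ is an automorphism so that $\langle X\rangle$ as a left (equivalently, two-sided) ideal really does consist of all polynomials with zero constant term. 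I would spell out that $R\otimes_R M_0\cong M_0$ is the standard unit isomorphism and that the composite is natural, hence an $R$-module isomorphism.

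For the "moreover" clause: if $M$ is finitely generated as an $A$-module, then $M/\langle X\rangle M$ is finitely generated over $A/\langle X\rangle\cong R$ (images of $A$-generators generate), so $M_0$ is finitely generated over $R$. If $M$ is projective over $A$, then base change along the ring map $A\to A/\langle X\rangle\cong R$ preserves projectivity: $M_0\cong R\otimes_A M$ is a direct summand of a base-changed free module, hence projective over $R$. If $M$ is stably free over $A$, say $M\oplus A^s\cong A^{t}$, then applying $R\otimes_A-$ and using $R\otimes_A A^k\cong R^k$ gives $M_0\oplus R^s\cong R^{t}$, so $M_0$ is stably free over $R$. I expect the main obstacle — really the only point requiring care rather than routine bookkeeping — to be the precise handling of the ideal $\langle X\rangle$ and the isomorphism $A/\langle X\rangle\cong R$ in the non-commutative Ore setting: one must check that left-multiplying $M_0$ by $\langle X\rangle$ inside $A\otimes_R M_0$ really yields the submodule generated by $x_1,\dots,x_n$, and that no subtlety from the twist $x_i r=\sigma(r)x_i$ interferes (it does not, precisely because $\sigma$ is an automorphism, so $\langle X\rangle = x_1A+\cdots+x_nA = Ax_1+\cdots+Ax_n$ and $A/\langle X\rangle$ is genuinely $R$, not a twisted quotient).
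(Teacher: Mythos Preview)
Your proof is correct and follows essentially the same route as the paper: for (i) you use $A\otimes_R R^{(J)}\cong A^{(J)}$, and for (ii) you apply $A/\langle X\rangle\otimes_A(-)$ together with $A/\langle X\rangle\cong R$ to obtain $M_0\cong M/\langle X\rangle M$, then deduce the finitely generated, projective, and stably free cases by base change along $A\to R$. The paper's argument is the same, only with the finitely generated step written out explicitly in terms of generators and constant terms rather than phrased abstractly as ``images of generators generate''.
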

\begin{proof}
(i) $M\cong A^{(Y)}$, then $M\cong A\otimes_R R^{(Y)}$ (note that this property is still valid for
any couple of rings $A\supseteq R$).

(ii) First note that $A/\langle X\rangle\cong R$; if $M\cong A\otimes_R M_0$ then $A/\langle
X\rangle \otimes_{A}M\cong A/\langle X\rangle \otimes_{A}A\otimes_R M_0$, i.e., $M/\langle X\rangle
M\cong A/\langle X\rangle \otimes_R M_0\cong R\otimes_R M_0\cong M_0$.

Let $M=\langle z_1,\dots ,z_t\rangle$ and $w\in M_0$, then $w=\overline{z}$ with $z\in M$; there
exist $p_1(X),\dots ,p_t(X)\in A$ such that $w=\overline{z}=\overline{p_1(X)z_1+\cdots
+p_t(X)z_t}=p_{01}\overline{z_1}+\cdots +p_{0t}\overline{z_t}$, where $p_{0i}$ is the independent
term of $p_i(X)$, $1\leq i\leq t$. Hence, $M_0=$ $\langle \overline{z_1}, \dots
,\overline{z_t}\rangle$.

If $M$ is projective, then $M\oplus M'=A^{(Y)}$, and $A/\langle X\rangle\otimes_{A} (M\oplus
M')\cong A/\langle X\rangle\otimes_{A} A^{(Y)}$, i.e., $M_0\oplus M'/\langle X\rangle M'\cong
R^{(Y)}$.

If $M$ is stably free, then $M\oplus A^r=A^{s}$, so applying $A/\langle X\rangle\otimes_{A}$ we get
$M_0\oplus R^r\cong R^{s}$.
\end{proof}

We can give a matrix description of extended modules and rings. For this we firstly we recall the
definition of square similar matrices.

\begin{definition}
Let $S$ be a ring and $F,G\in M_s(S)$, it says that $F$ and $G$ are similar if there exists $P\in
G_s(S)$ such that $F=PGP^{-1}$. In particular, let $F(X)$ be a square matrix over $A$ of size
$s\times s$ and $F(0)$ the matrix over $R$ obtained from $F(X)$ replacing all the variables
$x_1,\dots ,x_n$ by $0$,
\begin{equation}
F(X)\approx F(0)\Leftrightarrow F(0)=P(X)F(X)P(X)^{-1},\, \text{for some}\, P(X)\in GL_s(A).
\end{equation}
\end{definition}

\begin{theorem}\label{matrixE}
Let $M$ be a finitely generated projective $A$-module and $F(X)\in M_s(A)$ be an idempotent matrix
such that $M=\langle F(X)\rangle$, where $\langle F(X)\rangle$ is the $A$-module generated by the
rows of $F(X)$.
\begin{enumerate}
\item[\rm (i)]If $F(X)\approx F(0)$, then $M$ is extended from $R$.
\item[\rm (ii)]If $M$ is extended from $R$, then there exists a non zero matrix $P(X)\in M_s(A)$ such that $P(X)F(0)=F(X)P(X)$.
\item[\rm (iii)]If $A$ is such that for every $s\geq 1$, given an idempotent matrix $F(X)\in M_s(A)$, $F(X)\approx F(0)$, then $A$ is $\mathcal{E}$.
\item[\rm (iv)]If $A$ is $\mathcal{E}$, then for each $s\geq 1$, given an idempotent matrix $F(X)\in M_s(A)$, there exists a non zero matrix
$P(X)\in M_s(A)$ such that $P(X)F(X)=F(0)P(X)$.
\end{enumerate}
\end{theorem}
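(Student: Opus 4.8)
The plan is to prove the four parts more or less independently, exploiting the identification $A/\langle X\rangle\cong R$ together with the functoriality of $A/\langle X\rangle\otimes_A-$, which is the thread running through Proposition \ref{8.1.3}. For part (i), suppose $F(X)=P(X)^{-1}F(0)P(X)$ with $P(X)\in GL_s(A)$. Since $F(X)$ is idempotent, so is $F(0)$, and the row module $\langle F(0)\rangle$ is a projective $R$-module $M_0$. Right multiplication by $P(X)$ is an $A$-automorphism of $A^s$ carrying $\langle F(X)\rangle$ onto $\langle F(0)P(X)\rangle=\langle F(0)\rangle_A$, the $A$-module generated by the rows of $F(0)$ viewed over $A$. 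Then I would check that $\langle F(0)\rangle_A\cong A\otimes_R M_0$: the inclusion $M_0\hookrightarrow A\otimes_R M_0$ of the scalars together with the idempotent $F(0)$ splits $A^s\cong (A\otimes_R R^s)$ compatibly, so $A\otimes_R\langle F(0)\rangle_R\cong\langle F(0)\rangle_A$ as $A$-modules. Hence $M\cong A\otimes_R M_0$ and $M$ is extended from $R$.

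For part (ii), assume $M\cong A\otimes_R M_0$. By Proposition \ref{8.1.3}(ii), $M_0\cong M/\langle X\rangle M$, and by the last paragraph of its proof this quotient is generated by the images of the rows of $F(X)$, i.e. $M_0=\langle F(0)\rangle_R$ where $F(0)$ is read as an idempotent over $R$ (idempotency of $F(0)$ follows by applying the ring map $A\to R$ to $F(X)^2=F(X)$). Now $M\cong A\otimes_R M_0\cong A\otimes_R\langle F(0)\rangle_R\cong\langle F(0)\rangle_A$, so the two idempotents $F(X)$ and $F(0)$ (both in $M_s(A)$) have isomorphic row modules over $A$. The standard fact that idempotent matrices with isomorphic row modules are related by $PF(0)=F(X)P$ for some $P\in M_s(A)$ then gives the conclusion; concretely, if $\varphi:\langle F(X)\rangle\to\langle F(0)\rangle_A$ is the isomorphism, extend it to an $A$-endomorphism of $A^s$ by sending the complement $\langle I-F(X)\rangle$ to $0$, and let $P(X)$ be its matrix in row notation, so that $F(X)P(X)=P(X)$ records that $P$ kills the complement and $P(X)=P(X)F(0)$ records that its image lies in $\langle F(0)\rangle_A$; combining, $P(X)F(0)=F(X)P(X)$, and $P(X)\neq 0$ since $\varphi$ is an isomorphism and $F(0)$ need not vanish (if $F(0)=0$ then $M_0=0$ and $M=0$, a degenerate case where one may take $P$ to be any matrix, or exclude it). The nonzero-ness claim is the only delicate point and I would handle the $F(0)=0$ case by noting $M=0$ makes the statement vacuous after adjusting the quantifier, or simply observe $I\cdot F(0)=F(X)\cdot 0$ fails so one genuinely needs $F(0)\neq 0$; the cleanest route is to take $P(X)$ built from $\varphi$ as above, which is automatically nonzero exactly when $M_0\neq 0$.

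Parts (iii) and (iv) are then almost formal. For (iii): let $M$ be any finitely generated projective $A$-module. Since $A$ has invariant basis number considerations aside, $M$ is a direct summand of some $A^s$, hence $M=\langle F(X)\rangle$ for an idempotent $F(X)\in M_s(A)$ (take $F(X)$ to be the projection $A^s\to M\hookrightarrow A^s$ in row notation). By hypothesis $F(X)\approx F(0)$, so part (i) applies and $M$ is extended from $R$; since $M$ was arbitrary, $A$ is $\mathcal{E}$. For (iv): given an idempotent $F(X)\in M_s(A)$, set $M:=\langle F(X)\rangle$, a finitely generated projective $A$-module; since $A$ is $\mathcal{E}$, $M$ is extended from $R$, so part (ii) produces a nonzero $P(X)\in M_s(A)$ with $P(X)F(0)=F(X)P(X)$. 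Transposing conventions, $P(X)F(X)=F(0)P(X)$ is the same statement with the roles of left/right (or the matrix $P$ replaced by its analogue on the other side); more precisely, applying part (ii) to the idempotent $F(0)$ in place of $F(X)$—whose ``value at $0$'' is again $F(0)$, and which has the same row module over $A$ as $F(X)$ by the argument in (ii)—yields $Q(X)F(0)=F(0)Q(X)$, which is not quite it, so instead I would run the construction of (ii) in the opposite direction: the isomorphism $\varphi^{-1}:\langle F(0)\rangle_A\to\langle F(X)\rangle$ extended by zero on $\langle I-F(0)\rangle$ has matrix $P(X)$ satisfying $F(0)P(X)=P(X)$ and $P(X)=P(X)F(X)$, hence $P(X)F(X)=F(0)P(X)$ as required.

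The main obstacle I anticipate is not any single hard step but rather bookkeeping: keeping straight the left/right and row/column conventions (the paper uses row notation for homomorphisms), and correctly identifying, in part (i), the $A$-module $\langle F(0)\rangle_A$ generated over $A$ by the rows of the $R$-idempotent $F(0)$ with the extension $A\otimes_R\langle F(0)\rangle_R$—this is where one actually uses that $\sigma$ is an automorphism and that $\langle X\rangle$ behaves well, via $A/\langle X\rangle\cong R$ and flatness/freeness of $A$ over $R$. The genuinely subtle point, already flagged above, is the nonvanishing of $P(X)$ in parts (ii) and (iv): the clean fix is to construct $P(X)$ as the matrix of the extended-by-zero isomorphism, which is nonzero precisely because the isomorphism is, modulo the trivial case $M=0$.
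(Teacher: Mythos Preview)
Your approach is essentially the paper's: for (i) you use the conjugating $P(X)$ to identify $\langle F(X)\rangle$ with the $A$-row space of $F(0)$ and then with $A\otimes_R\langle F(0)\rangle_R$; for (ii) you build $P(X)$ from the isomorphism on images, extended across the complementary summands; and (iii), (iv) are formal. The paper proceeds identically, with $P(X)=H(X)\oplus T(X)$ where $H(X)$ is your $\varphi$ and $T(X):\ker F(X)\to\ker F(0)$ is allowed to be arbitrary rather than zero.

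Two small corrections. In (i), from $P(X)F(X)=F(0)P(X)$ one gets that right multiplication by $P(X)$ carries $\langle F(0)\rangle_A$ onto $\langle F(X)\rangle$, not the direction you wrote, and $\langle F(0)P(X)\rangle$ is not equal to $\langle F(0)\rangle_A$ as submodules (only isomorphic); this is exactly the bookkeeping slip you anticipated and does not affect the argument. In (ii), your extension-by-zero gives $P(X)=0$ when $F(X)=0$; the paper's clean fix is simply to take $P(X)=I_s$ in that case (equivalently, to allow a nonzero $T(X)$), which makes the nonvanishing claim hold uniformly. Your reading of (iv) is correct: one reruns the construction of (ii) with $\varphi^{-1}$ to obtain $P(X)F(X)=F(0)P(X)$, and the paper's ``follows from (ii)'' is to be understood in that sense.
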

\begin{proof}
(i) There exists $P(X)\in GL_s(A)$ such that $P(X)F(X)P(X)^{-1}=F(0)$. Since $A$ is
quasi-commutative, $F(0)\in M_s(R)$ is idempotent. Let $M_0:=\langle F(0)\rangle$ the $R$-module
generated by the rows of $F(0)$, then $M_0$ is a finitely generated projective $R$-module. We will
prove that $\langle F(X)\rangle\cong A\otimes_R M_0$, i.e., $M$ is extended from $R$. $F(X),P(X)$
define $A$-endomorphisms of $A^s$, with $P(X)$ bijective, and $F(0)$ define a $R$-endomorphism of
$R^s$. Let $G(X):=i_{A}\otimes_R F(0)$, then the following diagram
\begin{equation*}
\begin{CD}
A^{s}\cong A\otimes_R R^s @>{G(X)}>>A\otimes_R R^s \cong A^{s}\\
@V{P(X)}VV @VV{P(X)}V \\
A^{s}@>{F(X)}>>A^{s}.
\end{CD}
\end{equation*}
is commutative since $P(X)F(X)P(X)^{-1}=F(0)$ and the matrix of $G(X)$ in the canonical basis of
$A^s$ coincides with $F(0)$. From this we conclude that $\langle F(X)\rangle=Im(F(X))\cong
Im(G(X))\cong Im(i_A)\otimes Im(F(0))=A\otimes_R M_0$.

(ii) We have $M\cong A\otimes_R M_0$, for some finitely generated projective $R$-module $M_0$, but
by Proposition \ref{8.1.3}, $M_0\cong M/\langle X\rangle M=\langle F(X)\rangle/\langle
X\rangle\langle F(X)\rangle=\langle F(0)\rangle$, so $M_0$ is generated by $s$ elements. Thus, we
have $Im(F(X))=\langle F(X)\rangle\cong A\otimes_R \langle F(0)\rangle=Im(G(X))$, where $F(X)$ and
$G(X)=F(0)$ are the idempotent endomorphisms of $A^{s}$ as in (i). Let $H(X):Im(F(X))$ $\rightarrow
Im(F(0))$ be an isomorphism. We have
\begin{center}
$A^{s}=Im(F(X))\oplus \ker(F(X))=Im(F(0))\oplus \ker(F(0))$.
\end{center}
Let $T(X)$ be any $A$-homomorphism from $\ker(F(X))$ to $\ker(F(0))$, for example,
$\textbf{\emph{w}}(X)T(X):=\textbf{\emph{w}}(0)$, with $\textbf{\emph{w}}(X)\in \ker(F(X))$. We
have the following diagram
\begin{equation*}
\begin{CD}
A^{s} @>{F(X)}>>A^{s}\\
@V{P(X)}VV @VV{P(X)}V \\
A^{s}@>{F(0)}>>A^{s}
\end{CD}
\end{equation*}
where $P(X)$ is the $A$-homomorphism defined by $P(X):=H(X)\oplus T(X)$. Note that the diagram is
commutative: If $\textbf{\emph{v}}(X)=\textbf{\emph{u}}(X)F(X)+\textbf{\emph{w}}(X)$, with
$\textbf{\emph{u}}(X)\in A^{s}$ and $\textbf{\emph{w}}(X)\in \ker(F(X))$, then
\begin{center}
$\textbf{\emph{v}}(X)F(X)P(X)=\textbf{\emph{u}}(X)F(X)^2P(X)+
\textbf{\emph{w}}(X)F(X)P(X)=\textbf{\emph{u}}(X)F(X)H(X)$;
\end{center}
on the other side,
\begin{center}
$\textbf{\emph{v}}(X)P(X)F(0)=[\textbf{\emph{u}}(X)F(X)H(X)+\textbf{\emph{w}}(X)T(X)]F(0)=
\textbf{\emph{u}}(X)F(X)H(X)F(0)+\textbf{\emph{w}}(X)T(X)F(0)=\textbf{\emph{u}}(X)F(X)H(X)$,
\end{center}
where the last equality follows from the fact that $\textbf{\emph{u}}(X)F(X)H(X)\in Im(F(0))$ and
$\textbf{\emph{w}}(X)T(X)\in \ker(F(0))$.

This proves that $P(X)F(0)=F(X)P(X)$. If $F(X)\neq 0$, then $H(X)\neq 0$ and hence $P(X)\neq 0$; if
$F(X)=0$, then $F(0)=0$, $H(X)=0$, $\ker(F(X))=A^s=\ker(F(0))$ and we can take $T(X)=P(X)=i_{A^s}$.

(iii) is a direct consequence of (i) and (iv) follows from (ii).
\end{proof}

\begin{remark}
In the proof of (ii) we observed that
\begin{center}
$\ker(F(X))\cong A^s/Im(F(X))$ and $\ker(F(0))\cong A^s/Im(F(0))$
\end{center}
are finitely presented modules with $Im(F(X))\cong Im(F(0))$. If there exists at least one
surjective homomorphism $T(X)$ from $\ker(F(X))$ to $\ker(F(0))$ and $A$ is left Noetherian, then
$P(X)$ is surjective, and hence bijective, i.e., in this situation $F(X)\approx F(0)$.
\end{remark}

The following results relate the extended condition $\mathcal{E}$ and the $\mathcal{PF}$ condition
for the Ore extension $A$.

\begin{proposition}\label{8.1.5}
Suppose that $R$ is $\mathcal{PF}$. $A$ is $\mathcal{E}$ if and only if $A$ is $\mathcal{PF}$.
\end{proposition}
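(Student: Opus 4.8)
The plan is to prove both implications using Proposition \ref{913}. For the forward direction, suppose $A$ is $\mathcal{E}$ and let $M$ be a finitely generated projective $A$-module. Since $A$ is $\mathcal{E}$, $M$ is extended from $R$, so $M\cong A\otimes_R M_0$ for some finitely generated $R$-module $M_0$; by Proposition \ref{8.1.3}(ii), $M_0$ is finitely generated projective as an $R$-module. Since $R$ is $\mathcal{PF}$, $M_0$ is free, say $M_0\cong R^n$, and therefore $M\cong A\otimes_R R^n\cong A^n$ is $A$-free. Hence $A$ is $\mathcal{PF}$.

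For the converse, suppose $A$ is $\mathcal{PF}$; we must show $A$ is $\mathcal{E}$, i.e. every finitely generated projective $A$-module is extended from $R$. But if $A$ is $\mathcal{PF}$, any such module $M$ is $A$-free, $M\cong A^{(Y)}$ with $Y$ finite, and by Proposition \ref{8.1.3}(i) a free $A$-module is extended from $R$. Thus $M$ is extended from $R$ and $A$ is $\mathcal{E}$.

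I do not expect any real obstacle here: the statement is essentially a bookkeeping consequence of Proposition \ref{8.1.3}(i)--(ii) together with the hypothesis that $R$ is $\mathcal{PF}$. The only point that requires a moment of care is that in the forward direction one needs $M_0$ to be genuinely projective (not merely finitely generated) in order to invoke the $\mathcal{PF}$ property of $R$; this is exactly what the parenthetical clause of Proposition \ref{8.1.3}(ii) supplies. The reverse direction uses only that freeness of a module is inherited under extension of scalars, which is the content of Proposition \ref{8.1.3}(i) and holds for any pair $A\supseteq R$.
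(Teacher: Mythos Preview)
Your proof is correct and follows essentially the same route as the paper's own argument: both directions are handled via Proposition~\ref{8.1.3}, using part (ii) to deduce that $M_0$ is finitely generated projective over $R$ in the forward direction, and part (i) to see that free $A$-modules are extended in the converse. (Your opening sentence mentions Proposition~\ref{913}, but you never actually use it; the proof rests entirely on Proposition~\ref{8.1.3}, exactly as in the paper.)
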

\begin{proof}
$\Rightarrow)$: Let $M$ be a f.g. projective $A$-module, then $M\cong A\otimes_R M_0$, where $M_0$
is a f.g. projective $R$-module (Proposition \ref{8.1.3}). But since $R$ is $\mathcal{PF}$, then
$M_0$ is $R$-free and hence $M$ is $A$-free.

$\Leftarrow)$: If $M$ is a f.g. projective left $A$-module, then $M$ is $A$-free, then by
Proposition \ref{8.1.3}, $M$ is extended from $R$.
\end{proof}

\begin{proposition}\label{APfThenRPf}
If $A$ is $\mathcal{PF}$, then $R$ is $\PF$.
\end{proposition}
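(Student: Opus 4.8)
The plan is to deduce the $\PF$ property of $R$ from that of $A$ by exploiting the quotient map $A \to A/\langle X\rangle \cong R$ together with a lifting argument for idempotent matrices. The statement is essentially the converse direction of Proposition \ref{8.1.5} with the $\mathcal{E}$ hypothesis removed, and the natural tool is part (iii) of Proposition \ref{913}: if we can realize $R$ as a quotient $A/I$ for a proper two-sided ideal $I$, then $A$ being $\PF$ forces $R$ to be $\PF$. Here $I = \langle X\rangle$ is exactly such an ideal, and Proposition \ref{8.1.3}(ii) already records $A/\langle X\rangle \cong R$, so the result should follow almost immediately.

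Concretely, first I would take an arbitrary finitely generated projective $R$-module $M$ and set $N := A \otimes_R M$. By Proposition \ref{8.1.3}(i)--(ii) (or directly, since extension of scalars preserves projectivity and finite generation) $N$ is a finitely generated projective $A$-module, hence $A$-free because $A$ is $\PF$; say $N \cong A^{\ell}$. Then I would apply $R \otimes_A - \cong (A/\langle X\rangle) \otimes_A -$ to the isomorphism $N \cong A^{\ell}$. On the left this gives $R \otimes_A (A \otimes_R M) \cong (R \otimes_A A) \otimes_R M \cong R \otimes_R M \cong M$, and on the right it gives $R \otimes_A A^{\ell} \cong R^{\ell}$. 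Hence $M \cong R^{\ell}$ is $R$-free, and since $M$ was an arbitrary finitely generated projective $R$-module, $R$ is $\PF$. This is exactly the computation carried out inside the proof of Proposition \ref{913}(iii), specialized to $T = A$, $S = R$, $I = \langle X\rangle$; indeed one can simply cite Proposition \ref{913}(iii) with $I = \langle X\rangle$ and $S = A/\langle X\rangle \cong R$.

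The argument is short and there is no serious obstacle; the only point requiring a word of care is the identification $R \otimes_A (A \otimes_R M) \cong M$, which is the standard base-change cancellation $(A/\langle X\rangle) \otimes_A A \cong A/\langle X\rangle$ followed by $R \otimes_R M \cong M$ — entirely routine. I would therefore present the proof as: invoke $A/\langle X\rangle \cong R$ from Proposition \ref{8.1.3}, note that $\langle X\rangle$ is a proper two-sided ideal of $A$, and apply Proposition \ref{913}(iii) directly.
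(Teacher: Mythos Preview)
Your proposal is correct and matches the paper's proof exactly: the paper simply notes that $A/\langle X\rangle\cong R$ and invokes Proposition~\ref{913}(iii), which is precisely the route you describe (and whose inner computation you have unpacked). There is nothing to add.
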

\begin{proof}
We know that $A/\langle X\rangle\cong R$, so the result follows form Proposition \ref{913}.
\end{proof}
\begin{corollary}\label{APfIffRPfAextended}
$A$ is $\PF$ if and only if $R$ is $\PF$ and $A$ is $\mathcal{E}$
\end{corollary}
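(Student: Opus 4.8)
The plan is to prove Corollary~\ref{APfIffRPfAextended} by simply combining the three propositions that immediately precede it, so the work is bookkeeping rather than genuine novelty. Recall the statement: $A$ is $\PF$ if and only if $R$ is $\PF$ and $A$ is $\mathcal{E}$.

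First I would handle the forward direction. Assume $A$ is $\PF$. Proposition~\ref{APfThenRPf} gives immediately that $R$ is $\PF$. Now that we know $R$ is $\PF$, Proposition~\ref{8.1.5} applies: it says that, under the hypothesis that $R$ is $\PF$, the conditions ``$A$ is $\mathcal{E}$'' and ``$A$ is $\PF$'' are equivalent. Since $A$ is $\PF$ by assumption, we conclude $A$ is $\mathcal{E}$. This establishes both halves of the conjunction.

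For the converse, assume $R$ is $\PF$ and $A$ is $\mathcal{E}$. Then the hypothesis of Proposition~\ref{8.1.5} ($R$ is $\PF$) is satisfied, and that proposition's equivalence lets us pass from ``$A$ is $\mathcal{E}$'' to ``$A$ is $\PF$''. Done.

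The only subtlety worth a moment's care is the logical structure: one must invoke Proposition~\ref{APfThenRPf} \emph{before} trying to use Proposition~\ref{8.1.5} in the forward direction, because Proposition~\ref{8.1.5} is stated only under the standing assumption that $R$ is $\PF$ --- it is not an unconditional equivalence. Once that ordering is respected there is no obstacle at all; the corollary is a formal consequence of the two propositions, and no further computation or module-theoretic argument is needed.

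\begin{proof}
$\Rightarrow)$: Suppose $A$ is $\PF$. By Proposition \ref{APfThenRPf}, $R$ is $\PF$. Hence the hypothesis of Proposition \ref{8.1.5} holds, and since $A$ is $\PF$, that proposition gives that $A$ is $\mathcal{E}$.

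$\Leftarrow)$: Suppose $R$ is $\PF$ and $A$ is $\mathcal{E}$. Then the hypothesis of Proposition \ref{8.1.5} holds, so the equivalence stated there yields that $A$ is $\PF$.
\end{proof}
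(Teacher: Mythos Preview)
Your proof is correct and follows exactly the same approach as the paper, which simply states that the corollary is a direct consequence of Propositions~\ref{8.1.5} and~\ref{APfThenRPf}. You have merely spelled out in detail the combination of those two results that the paper leaves implicit.
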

\begin{proof}
This is direct consequence of Propositions \ref{8.1.5} and \ref{APfThenRPf}.
\end{proof}

\begin{corollary}\label{927}
Let $R$ be a Noetherian, regular and $\PSF$ ring. Then, $A$ is $\HH$ if and only if $R$ is
$\mathcal{PF}$ and $A$ is $\mathcal{E}$.
\end{corollary}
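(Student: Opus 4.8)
The plan is to reduce Corollary~\ref{927} to the already-established Corollary~\ref{APfIffRPfAextended} by showing that, under the stated hypotheses on $R$, the property ``$A$ is $\HH$'' is equivalent to ``$A$ is $\PF$''. Recall that $\HH\cap\PSF=\PF$ (noted in the introduction), so it suffices to prove that the hypotheses force $A$ to be $\PSF$; then $A$ is $\HH$ $\iff$ $A$ is $\HH$ and $A$ is $\PSF$ $\iff$ $A$ is $\PF$ $\iff$ ($R$ is $\PF$ and $A$ is $\mathcal{E}$) by Corollary~\ref{APfIffRPfAextended}.

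Concretely, I would argue as follows. First, observe that $R$ Noetherian implies $A=R[x_1,\dots,x_n;\sigma]$ is left (and right) Noetherian, since $\sigma$ is an automorphism and the $x_i$ commute; this is the standard Hilbert basis theorem for (iterated) skew polynomial extensions. Second, I would invoke the fact that $A$ inherits regularity from $R$: since $R$ is regular (every finitely generated module has finite projective dimension, equivalently $R$ has finite global dimension as it is Noetherian) and $\sigma$ is an automorphism, the skew polynomial ring $R[x;\sigma]$ again has finite global dimension $\gldim R + 1$, and iterating, $A$ has finite global dimension. Third, with $A$ Noetherian of finite global dimension, I would use a Serre-type / Bass--Quillen-flavoured argument: a finitely generated projective $A$-module $P$ satisfies $P\cong A\otimes_R P_0$ with $P_0$ projective over $R$? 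No --- more directly, I want to show stably free projectives over $A$ are dealt with via $\PSF$ of $R$. The cleanest route is: by Proposition~\ref{8.1.3}, any finitely generated projective $A$-module that is \emph{extended} has the form $A\otimes_R M_0$ with $M_0$ projective over $R$; but to get $\PSF$ of $A$ I instead want the contraction argument. So I would show directly that $A$ is $\PSF$ by combining Noetherianity, regularity and the hypothesis that $R$ is $\PSF$, via the standard fact that for a Noetherian regular ring the Grothendieck group $K_0$ is generated by $[A]$ once it is so for $R$ --- i.e.\ $K_0(A)\cong K_0(R)$ by a Quillen-type resolution/localization theorem for skew polynomial extensions --- and $R$ being $\PSF$ means $\widetilde{K_0}(R)=0$, whence $\widetilde{K_0}(A)=0$, i.e.\ $A$ is $\PSF$.

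Having established that $A$ is $\PSF$, the corollary follows formally: if $A$ is $\HH$ then $A\in\HH\cap\PSF=\PF$, so by Corollary~\ref{APfIffRPfAextended} $R$ is $\PF$ and $A$ is $\mathcal{E}$; conversely if $R$ is $\PF$ and $A$ is $\mathcal{E}$ then Corollary~\ref{APfIffRPfAextended} gives $A$ is $\PF$, and $\PF\subseteq\HH$ trivially (a free module is stably free and a $\PF$ ring makes every stably free module projective hence free), so $A$ is $\HH$.

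The main obstacle I anticipate is justifying that $A$ is $\PSF$, i.e.\ that $\widetilde{K_0}(R)=0$ propagates to $\widetilde{K_0}(A)=0$ for this skew polynomial extension. Over a commutative Noetherian regular ring the key input is the resolution theorem together with a ``fundamental theorem of $K$-theory'' computing $K_0$ of a (skew) polynomial ring; one must check the $\sigma$-twisted, several-variable version of this statement genuinely applies here, which uses precisely that $\sigma$ is an automorphism and that the $x_i$ commute so that $A$ is an iterated Ore extension of automorphism type. Everything else --- Noetherianity, finite global dimension, and the purely formal manipulation of $\HH$, $\PSF$, $\PF$, $\mathcal{E}$ via Corollary~\ref{APfIffRPfAextended} --- should be routine.
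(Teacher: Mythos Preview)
Your approach is correct and coincides with the paper's: both arguments reduce the statement to Corollary~\ref{APfIffRPfAextended} by first establishing that, under the hypotheses on $R$, the ring $A$ is $\PSF$ (so that $A$ is $\HH$ $\iff$ $A$ is $\PF$). The only difference is that the paper simply invokes \cite{lezamareyes1}, Corollary~2.8, for this $\PSF$ step, whereas you unpack what that reference proves---namely that $A$ inherits Noetherianity and regularity from $R$ (Hilbert basis and global-dimension bounds for automorphism-type Ore extensions), and then that $\widetilde{K_0}(R)=0$ forces $\widetilde{K_0}(A)=0$ via the fundamental-theorem isomorphism $K_0(A)\cong K_0(R)$ for iterated skew polynomial rings of automorphism type. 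Your identification of this last isomorphism as the only nontrivial ingredient is accurate; once it is granted (as it is in the cited reference), the remainder is the formal manipulation you describe.
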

\begin{proof}
This follows from \cite{lezamareyes1} Corollary 2.8, and Corollary \ref{APfIffRPfAextended}.
\end{proof}

\begin{remark}\label{ejemrsigminv}
(i) Let $A:=R[x;\sigma]$ be the skew polynomial ring over $R=K[y]$, where $K$ is a field and
$\sigma(y):=y+1$. From \cite{McConnell} 12.2.11 we know that $A$ is not $\mathcal{E}$ with respect
to $R$, and hence, from numeral (ii) in Proposition \ref{913}, we conclude that $A$ is not
$\mathcal{E}$ with respect to $K$. But precisely observe that $A=K[t;i_K][x;\sigma]$ is an Ore
extension such that the $\sigma'$s are different and $tx\neq xt$. Thus, the conditions we are
assuming in this paper about the commutativity of the variables and the restriction to only one
automorphism for the ring of coefficients are more than important.

(ii) On the other hand, since $R$ is a commutative principal ideal domain ($PID$) then $R$ is
$\PF$, therefore, by Corollary \ref{APfIffRPfAextended}, $A$ is not $\PF$. This means that in
Theorem \ref{quillensuslinforore} below we can not weak the condition on $K$ to be a commutative
$PID$.

(iii) In addition, observe that $R$ is a commutative Noetherian regular ring with finite Krull
dimension and however $A$ is not $\mathcal{E}$, so the Bass-Quillen conjecture (see \cite{Lam}) in
the case of our Ore extensions conduces to Quillen-Suslin Theorem \ref{quillensuslinforore}.

(iv) Finally, this example also shows that although $R$ is $\mathcal{H}$, $R[x;\sigma]$ is not
$\mathcal{H}$. In fact, since $R$ is $\mathcal{PF}$, we conclude that $R$ is $\mathcal{PSF}$, so
the claimed follows from Corollary \ref{927}. Thus, the Hermite conjecture for Ore extensions fails
(see \cite{Lam}).
\end{remark}

\section{Varserstein's theorem}

Let $R$ be a commutative ring, the Vaserstein's theorem in commutative algebra says that if
$F(x_1,\dots,x_n)\in M_{r\times s}(R[x_1,\dots,x_n])$, then, $F(x_1,\dots,x_n)\sim F(0)$ if and
only if for every $\mm \in \Max(R)$, $\overline{F(x_1,\dots,x_n)}\sim \overline{F(0)}$, where
$\overline{F(x_1,\dots,x_n)}$ represents the image of $F(x_1,\dots,x_n)$ in
$R_{\mm}[x_1,\dots,x_n]$ and $\sim$ denotes the relation of equivalence between matrices, i.e.,
\begin{center} $F(X)=P(X)F(0)Q(X)$,
\end{center}
with $P(X)\in GL_r(R[x_1,\dots,x_n])$ and $Q(X)\in GL_s(R[x_1,\dots,x_n])$. In this section we
extends this theorem to Ore extensions of type $A:=R[x_1, \dots,x_n;\sigma]$.

Recall (see \cite{lezamaore}) that if $S$ is a multiplicative system of $R$ and $\sigma(S)\subseteq
S$, then $S^{-1}A$ exists and
\begin{center}
$S^{-1}A\cong (S^{-1}R)[x_1,\cdots x_n;\overline{\sigma}]$, with
$\overline{\sigma}(\frac{r}{s}):=\frac{\sigma(r)}{\sigma(s)}$.
\end{center}
In particular, if $\mm\in \Max(R)$ and $S:=R-\mm$, we write
\begin{center}
$A_{\mm}:=S^{-1}A\cong R_{\mm}[x_1,\dots,x_n; \overline{\sigma}]$, where $\sigma$ satisfies
$\sigma(s)\notin \mm$ for any $s\notin \mm$.
\end{center}
From now on in the present paper we will assume that $\sigma$ satisfies the following condition:
\begin{center}
$\clubsuit$: Given $\mm \in \Max(R)$, if $s\notin \mm$, then $\sigma(s)\notin \mm$.
\end{center}

Some preliminary results are needed for the main theorem.

\begin{proposition}\label{931g}
Let $B$ be a ring and $\sigma$ an endomorphism of $B$. Then,
\begin{enumerate}
\item[\rm (i)]For every $r\geq 1$, $M_r(B[x_1,\dots,x_n;\sigma])\cong M_r(B)[x_1,\dots,x_n;\sigma]$.
\item[\rm (ii)]If $\sigma(Z(B))\subseteq Z(B)$ and $s\in Z(B)$, then
\begin{align*}
\phi:B[x_1,\dots,x_n;\sigma] & \to B[x_1,\dots,x_n;\sigma]\\
p(x_1,\dots,x_n)& \mapsto p(sx_1,\dots,sx_n)
\end{align*}
is a ring homomorphism.
\item[\rm (iii)]$\varphi$ defined as
\begin{center}
$\varphi:B[x_1,\dots,x_n;\sigma]\to B[x_1,\dots,x_n;y_1,\dots,y_n;\sigma]$ ,
$\varphi(p(x_1,\dots,x_n)):=p(x_1+y_1,\dots, x_n+y_n)$
\end{center}
is a ring homomorphism.
\end{enumerate}
\end{proposition}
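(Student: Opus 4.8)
The plan is to verify each of the three claims by a direct check that the stated maps respect the defining relations of the Ore extension $B[x_1,\dots,x_n;\sigma]$, namely $x_ix_j=x_jx_i$, $x_ib=\sigma(b)x_i$ for $b\in B$, and additivity and multiplicativity on $B$ itself. Throughout I keep in mind that in these extensions the $x_i$ commute with each other but not with $B$, so the only subtle point is tracking how $\sigma$ interacts with the scalars one introduces (the element $s$, or the new variables $y_i$).

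For part (i), I would produce the isomorphism $M_r(B[x_1,\dots,x_n;\sigma])\cong M_r(B)[x_1,\dots,x_n;\sigma]$ explicitly: an $r\times r$ matrix whose entries are polynomials $\sum_\alpha b_\alpha^{(ij)}x^\alpha$ corresponds to the polynomial $\sum_\alpha (b_\alpha^{(ij)})_{ij}\, x^\alpha$ with matrix coefficients, where the automorphism $\sigma$ of $M_r(B)$ is applied entrywise. One checks this assignment is additive, bijective on the underlying sets, and carries the multiplication on one side to the other; the key relation to confirm is $x_i C = \sigma(C)x_i$ for $C\in M_r(B)$, which reduces entrywise to $x_i b = \sigma(b)x_i$ in $B[x_1,\dots,x_n;\sigma]$. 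Since $\sigma$ acting entrywise is clearly a ring endomorphism of $M_r(B)$, the right-hand side is a well-defined Ore extension and the correspondence is a ring isomorphism.

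For parts (ii) and (iii) the uniform strategy is the universal property of Ore extensions: to define a ring homomorphism out of $B[x_1,\dots,x_n;\sigma]$ it suffices to give a ring homomorphism on $B$ together with images $z_1,\dots,z_n$ of the $x_i$ that pairwise commute and satisfy $z_i b' = \sigma(b')' z_i$ for the image $b'$ of $b$. In (ii) the map restricts to the identity on $B$ and sends $x_i\mapsto sx_i$; here $s\in Z(B)$ and $\sigma(Z(B))\subseteq Z(B)$, so $sx_i\cdot sx_j = s^2x_ix_j = s^2x_jx_i = sx_j\cdot sx_i$ using centrality of $s$, and $(sx_i)b = s\sigma(b)x_i = \sigma(b)sx_i = \sigma(b)(sx_i)$, again using that $s$ is central; the hypothesis $\sigma(Z(B))\subseteq Z(B)$ is what is needed when one expands products $p(sx_1,\dots,sx_n)q(sx_1,\dots,sx_n)$ so that powers of $s$ picked up by commuting past the $x_i$ — which carry $\sigma$-twists, e.g. $x_i s = \sigma(s)x_i$ — remain central and can be collected. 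In (iii) the target is $B[x_1,\dots,x_n,y_1,\dots,y_n;\sigma]$ with all $2n$ variables commuting and each twisting $B$ by the same $\sigma$; the map is the identity on $B$ and $x_i\mapsto x_i+y_i$. The elements $x_i+y_i$ pairwise commute since all variables do, and $(x_i+y_i)b = \sigma(b)x_i+\sigma(b)y_i = \sigma(b)(x_i+y_i)$, so the universal property applies and $\varphi$ is a ring homomorphism.

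The main obstacle — really the only place one must be careful rather than merely mechanical — is bookkeeping in (ii): when multiplying two polynomials in the $sx_i$, moving an $s$ from a coefficient position past some $x_i$ introduces $\sigma(s)$, $\sigma^2(s)$, etc., and one must know these iterated images are still central (hence can be freely reordered and matched against the corresponding powers of $s$ on the other side of the claimed identity $\phi(pq)=\phi(p)\phi(q)$). This is exactly guaranteed by $\sigma(Z(B))\subseteq Z(B)$, which gives $\sigma^k(s)\in Z(B)$ for all $k$ by induction. Once that observation is in place, all three verifications are routine expansions of the defining relations, and I would present them compactly by invoking the universal property rather than writing out the full coefficient-by-coefficient computation.
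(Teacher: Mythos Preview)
Your approach is essentially the paper's: both arguments reduce to checking that the images of the $x_i$ satisfy the defining relations of the Ore extension, though you package this via the universal property while the paper verifies multiplicativity directly on pairs of monomials $ax^{\alpha}$, $bx^{\beta}$ (and handles (i) by induction on $n$ with the same entrywise isomorphism you describe). One slip to correct in (ii): you write $sx_i\cdot sx_j = s^2x_ix_j$ ``using centrality of $s$'', but centrality of $s$ in $B$ does not make it commute with $x_i$; since $x_is=\sigma(s)x_i$, the correct expansion is $sx_i\cdot sx_j = s\,\sigma(s)\,x_ix_j$. This does not damage your conclusion---both $sx_i\cdot sx_j$ and $sx_j\cdot sx_i$ equal $s\,\sigma(s)\,x_ix_j$, so the images still commute---and you evidently understand the point, since you invoke exactly this $\sigma$-twist in the next paragraph when discussing the bookkeeping for $\phi(pq)=\phi(p)\phi(q)$.
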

\begin{proof}
(i) Using an inductive argument we only need to show that $M_r(B[x_1;\sigma])\cong
M_r(B)[x_1;\sigma]$. If we define $\sigma(F):=[\sigma(f_{ij})]$, with $F:=[f_{ij}]\in M_r(B)$, then
the claimed isomorphism is given by
\begin{center}
$F^{(0)}+F^{(1)}x_1+\cdots+F^{(t)}x_1^t\mapsto [\sum_{k=0}^t f_{ij}^{(k)}x_1^k]$.
\end{center}
(ii) It is clear that $\phi$ is additive and $\phi(1)=1$. So, we have to show that $\phi(ax^\alpha
bx^\beta)=\phi(ax^\alpha)\phi(bx^\beta)$ for every $a,b\in B$ and $\alpha,\beta\in \mathbb{N}^n$.
Since $\sigma^k(s)\in Z(B)$ for every $k\geq 0$, then
\begin{align*}
\phi(ax^\alpha bx^\beta) & =
\phi(a\sigma^\alpha(b)x^{\alpha+\beta})=a\sigma^\alpha(b)(sx_1)^{\alpha_1+\beta_1}\cdots (sx_n)^{\alpha_n+\beta_n}\\
& = a\sigma^{\alpha}(b)s\sigma(s)\sigma^2(s)\cdots \sigma^{\alpha_1+\alpha_2+\cdots+\alpha_n+\beta_1+\beta_2+\cdots+\beta_n-1}(s)x^{\alpha+\beta};\\
\phi(ax^\alpha)\phi(bx^\beta) & = a(sx_1)^{\alpha_1}\cdots (sx_n)^{\alpha_n}b(sx_1)^{\beta_1}\cdots (sx_n)^{\beta_n}\\
& = a\sigma^{\alpha}(b)s\sigma(s)\sigma^2(s)\cdots
\sigma^{\alpha_1+\alpha_2+\cdots+\alpha_n+\beta_1+\beta_2+\cdots+\beta_n-1}(s)x^{\alpha+\beta}.
\end{align*}

(iii) Obviously $\varphi$ is additive and $\varphi(1)=1$. Only rest to show that $\varphi(ax^\alpha
bx^\beta)=\varphi(ax^\alpha)\varphi(bx^\beta)$ for every $a,b\in B$ and $\alpha,\beta\in
\mathbb{N}^n$.
\begin{align*}
\varphi(ax^\alpha bx^\beta) & =
\varphi(a\sigma^\alpha(b)x^{\alpha+\beta})=a\sigma^\alpha(b)(x_1+y_1)^{\alpha_1+\beta_1}\cdots (x_n+y_n)^{\alpha_n+\beta_n};\\
\varphi(ax^\alpha)\varphi(bx^\beta) & = a(x_1+y_1)^{\alpha_1}\cdots (x_n+y_n)^{\alpha_n}
b(x_1+y_1)^{\beta_1}\cdots (x_n+y_n)^{\beta_n}\\
& =a\sigma^{\alpha}(b)(x_1+y_1)^{\alpha_1+\beta_1}\cdots (x_n+y_n)^{\alpha_n+\beta_n}.
\end{align*}
\end{proof}
\begin{lemma}\label{lema1720}
Let $B$ be a ring, $S\subset Z(B)$ a multiplicative system of $B$. Let $B[x_1,\dots,x_n;\sigma]$ be
an Ore extension such that $\sigma(Z(B))\subseteq Z(B)$. Given the matrices $F(X)\in M_{r\times
s}(B[x_1,\dots,x_n;\sigma]), G(X)\in M_{s\times t}(B[x_1,\dots,x_n;\sigma])$ and $H(X)\in
M_{r\times t}(B[x_1,\dots,x_n;\sigma]), $ let $\ol{L(X)}$ be the image of the matrix $L(X)$
corresponding to the canonical homomorphism
\[
B[x_1,\dots,x_n;\sigma]\to (S^{-1}B)[x_1,\dots,x_n;\overline{\sigma}],\quad
\overline{\sigma}(\tfrac{r}{s}):=\tfrac{\sigma(r)}{\sigma(s)}.
\]
Suppose that $\ol{F(X)}\:\ol{G(X)}=\ol{H(X)}$ and $F(0)G(0)=H(0)$. Then, there exists $s\in S$ such
that $F(sX)G(sX)=H(sX)$, where $L(sX):=L(sx_1,\dots,sx_n)$.
\end{lemma}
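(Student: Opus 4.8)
The plan is to reduce the matrix identity to finitely many scalar identities in the coefficient ring, and then clear denominators simultaneously. First I would write out what the hypothesis $\ol{F(X)}\,\ol{G(X)}=\ol{H(X)}$ means entrywise. By Proposition \ref{931g}(i) the relevant matrix rings are themselves Ore extensions, so each of $F(X),G(X),H(X)$ can be expanded as a polynomial $\sum_\alpha F^{(\alpha)}x^\alpha$ etc.\ with coefficient matrices over $B$ (resp.\ over $S^{-1}B$ after applying the canonical map). Because the $x_i$ commute and $x_i r=\sigma(r)x_i$, when we multiply $F(X)G(X)$ the coefficient of a fixed monomial $x^\gamma$ is a finite $B$-linear combination of products $F^{(\alpha)}\sigma^{\alpha}(G^{(\beta)})$ with $\alpha+\beta=\gamma$; the same formula holds over $S^{-1}B$ with $\ol{\sigma}$. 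Thus the identity $\ol{F(X)}\,\ol{G(X)}=\ol{H(X)}$ is equivalent to a finite system of equalities of matrices over $S^{-1}B$, one for each monomial $x^\gamma$ appearing in $F(X),G(X),H(X)$.

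Next I would use the standard localization fact: each such equality is an equality of matrices with entries in $S^{-1}B$ obtained from entries over $B$, hence there is a single element $s_\gamma\in S$ which, after multiplication, witnesses the equality already in $B$; since there are only finitely many monomials $\gamma$, taking $s:=\prod_\gamma s_\gamma\in S$ gives one element that works for all of them simultaneously. Concretely, for this chosen $s$ one gets, for every $\gamma$,
\begin{equation*}
s^{N}\Big(\sum_{\alpha+\beta=\gamma}F^{(\alpha)}\sigma^{\alpha}(G^{(\beta)})\Big)=s^{N}H^{(\gamma)}
\end{equation*}
in $M_{r\times t}(B)$ for a suitable common exponent $N$ (here I use $S\subseteq Z(B)$ so that $s^N$ is central and can be pulled through the matrix products; note also $\sigma(Z(B))\subseteq Z(B)$ keeps the powers $\sigma^j(s)$ central). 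The point is now to rewrite the left-hand side as the coefficient of $x^\gamma$ in $F(sX)G(sX)$. Using Proposition \ref{931g}(ii), $p(X)\mapsto p(sX)$ is a ring endomorphism of $B[x_1,\dots,x_n;\sigma]$ (and likewise of the matrix Ore extension), so $F(sX)G(sX)=(FG)(sX)$ and its $x^\gamma$-coefficient is $s^{|\gamma|}\sum_{\alpha+\beta=\gamma}F^{(\alpha)}\sigma^\alpha(G^{(\beta)})$, while the $x^\gamma$-coefficient of $H(sX)$ is $s^{|\gamma|}H^{(\gamma)}$. Comparing with the displayed identity (absorbing the extra power of $s$, which is harmless because $s\in S$ and we only need existence of \emph{some} such $s$) yields $F(sX)G(sX)=H(sX)$.

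The one delicate point — and the place to be careful rather than the place of real difficulty — is the bookkeeping of the twisting automorphism when clearing denominators: one must check that applying the canonical homomorphism $B\to S^{-1}B$ intertwines $\sigma$ with $\ol\sigma$ (immediate from the formula $\ol\sigma(\tfrac rs)=\tfrac{\sigma(r)}{\sigma(s)}$) so that ``coefficient of $x^\gamma$'' commutes with localization, and that the finitely many $s_\gamma$ really can be merged into one $s$ without disturbing the twists, which works because $S$ is central and multiplicatively closed. The role of the extra hypothesis $F(0)G(0)=H(0)$ is essentially to guarantee that the $\gamma=0$ equation already holds over $B$ with no denominator at all, so no localization step degrades it; it can be folded into the general argument by simply not including a factor $s_0$. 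Modulo this routine tracking, everything reduces to the elementary ``finitely many denominators can be cleared at once'' principle together with the two ring homomorphisms of Proposition \ref{931g}.
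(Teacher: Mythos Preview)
Your approach is essentially the paper's: form $D(X):=F(X)G(X)-H(X)$, use $\ol{D(X)}=0$ to find a single central $s\in S$ with $sD^{(\gamma)}=0$ for every nonzero coefficient matrix, and then check that $D(sX)=0$ termwise. One correction: the $x^\gamma$-coefficient of $D(sX)$ is not $s^{|\gamma|}D^{(\gamma)}$ but $D^{(\gamma)}\,s\sigma(s)\sigma^2(s)\cdots\sigma^{|\gamma|-1}(s)$ (this is exactly how the paper writes it, via the computation in Proposition~\ref{931g}(ii)); your argument still goes through because for $|\gamma|\ge 1$ this product contains the central factor $s$, while the hypothesis $F(0)G(0)=H(0)$ disposes of $\gamma=0$, which is indeed its real role. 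The detour through an exponent $N$ is unnecessary---a single $s$ (the finite product of the $s_\gamma$) already kills each $D^{(\gamma)}$ to the first power.
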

\begin{proof}
Let $D(X):=F(X)G(X)-H(X)$; since $F(0)G(0)-H(0)=0$ then
\[
D(X)=D^{(1)}x^{\alpha_1}+D^{(2)}x^{\alpha_2}+\cdots+D^{(\ell)}x^{\alpha_\ell},
\]
with $D^{(k)}\in M_{r\times t}(R)$, and $x^{\alpha_k}:=x_1^{\alpha_{k1}}\cdots x_n^{\alpha_{kn}}$,
where $\alpha_{k1}+\cdots+\alpha_{kn}>0$ for every $1\le k\le \ell$.

From $\overline{D(X)}=\overline{F(X)G(X)-H(X)}=\overline{F(X)}\:
\overline{G(X)}-\overline{H(X)}=\overline{0}$ we conclude that
\[
\overline{D^{(1)}}x^{\alpha_1}+\overline{D^{(2)}}x^{\alpha_1}+
\cdots+\overline{D^{(\ell)}}x^{\alpha_\ell}=\overline{0}.
\]
Then, each entry $d_{ij}^{(k)}$ of the matrix $D^{(k)}$ is such that
$\frac{d_{ij}^{(k)}}{1}=\frac{0}{1}$ in $S^{-1}R$, so we find $s_{ij}^{(k)}\in S$ such that
$s_{ij}^{(k)}d_{ij}^{(k)}=0$. Let $s:= \prod_{i,j,k}s_{ij}^{(k)}$, then $s\in Z(B)$ and
$D^{(k)}s=0$ for each $k=1,\ldots,\ell$. Thus, using Lemma \ref{931g}, we get
\begin{center}
$F(sX)G(sX)-H(sX)=D(sX)=D^{(1)}s\sigma(s)\sigma^2(s)\cdots \sigma^{\alpha_{11}+\cdots
+\alpha_{1n}-1}(s)x^{\alpha_1}+\cdots +D^{(\ell)}s\sigma(s)\sigma^2(s)\cdots
\sigma^{\alpha_{11}+\cdots +\alpha_{ln}-1}(s)x^{\alpha_\ell}=0$.
\end{center}
\end{proof}

\begin{theorem}[\textbf{Vaserstein's theorem}]\label{GVasersteinTeorem}
Let $R$ be a commutative ring and $A:=R[x_1,\dots,x_n;\sigma]$. Then, $F(X)\in M_{r\times s}(A)$ is
equivalent to $F(0)$ if and only if $F(X)$ is locally equivalent to $F(0)$ for every $\mm\in\Max
R$.
\end{theorem}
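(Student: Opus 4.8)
The plan is to follow the classical Quillen-Vaserstein strategy, adapted to the quasi-commutative Ore extension $A = R[x_1,\dots,x_n;\sigma]$. The forward direction is trivial: if $F(X) = P(X)F(0)Q(X)$ globally with $P(X)\in GL_r(A)$, $Q(X)\in GL_s(A)$, then localizing at any $\mm\in\Max R$ gives $\overline{P(X)}\in GL_r(A_{\mm})$, $\overline{Q(X)}\in GL_s(A_{\mm})$, so $\overline{F(X)}\sim\overline{F(0)}$ over $A_{\mm}$. The substance is the converse, and the proof I would give runs through a patching argument on the ideal of "denominators that work."

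The key construction: let $\varphi: A \to A[y_1,\dots,y_n]$ (meaning $A[y_1,\dots,y_n;\sigma]$ with $y$'s commuting and central-interacting as in Proposition \ref{931g}(iii)) be the substitution $x_i\mapsto x_i+y_i$. For $a\in R$, also use the scaling $\phi_a: x_i\mapsto a x_i$ of Proposition \ref{931g}(ii) (valid since $R$ is commutative, so $\sigma(Z(R))\subseteq Z(R)$ holds automatically). Define
\[
\mathfrak{q} := \{\, a\in R : F(X+aY)\ \text{is equivalent to}\ F(X)\ \text{over}\ A[Y] \,\},
\]
where $F(X+aY)$ means applying $\varphi$ then scaling the $y$-variables by $a$. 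One checks $\mathfrak{q}$ is an ideal of $R$ (closure under sums uses a two-step composition of equivalences; closure under $R$-multiples is immediate from further scaling). The theorem reduces to showing $\mathfrak{q} = R$, i.e. $1\in\mathfrak{q}$: indeed, $1\in\mathfrak{q}$ means $F(X+Y)\sim F(X)$ over $A[Y]$, and then specializing $Y\mapsto -X$ (a ring homomorphism $A[Y]\to A$) yields $F(0)\sim F(X)$ over $A$. It therefore suffices to show $\mathfrak{q}$ is not contained in any maximal ideal $\mm$ of $R$.

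Fix $\mm\in\Max R$. By hypothesis $\overline{F(X)}\sim\overline{F(0)}$ over $A_{\mm} = R_{\mm}[X;\overline\sigma]$, so there are invertible matrices over $A_{\mm}$ realizing this; pulling denominators out and using the substitution $\varphi$ plus a scaling, one produces $s\notin\mm$ together with matrices over $A$ — here condition $\clubsuit$ is exactly what guarantees $R-\mm$ is $\sigma$-stable so that $A_{\mm}$ has the stated Ore form — such that over $A_{\mm}[Y]$ one has $F(X+sY)\sim F(X)$. The defining matrix identities $P(X,Y)F(X+sY)Q(X,Y) = F(X)$ and $\tilde P(X,Y)F(X)\tilde Q(X,Y)=F(X+sY)$ hold over $A_{\mm}[Y]$; applying Lemma \ref{lema1720} (with base ring $B=A$ playing the role, $S = \{\sigma\text{-compatible powers built from}\ R-\mm\}$, noting $S\subset Z(A)$ since $R$ is commutative and the $x_i$ are central modulo $\sigma$-twisting — more precisely one works in $M_N(A)$ and uses Proposition \ref{931g}(i)) clears the remaining denominators: after multiplying the $Y$-variables by a further element $s'\notin\mm$ the four matrix equations hold over $A[Y]$, and one still has invertibility because the inverse relations get cleared simultaneously. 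Hence $ss'\cdot(\text{unit factors}) \in\mathfrak{q}$ with $ss'\notin\mm$, so $\mathfrak{q}\not\subseteq\mm$.

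The main obstacle I anticipate is the bookkeeping around invertibility when clearing denominators: Lemma \ref{lema1720} as stated clears a single product identity $FG = H$, but equivalence of matrices requires simultaneously clearing $P(X+sY)$ and its inverse (and similarly $Q$), so one must apply the lemma to the pair of identities $PP^{-1}=I$, $P^{-1}P=I$ and to $PF(X+sY)Q=F(X)$ at once, choosing a common scaling factor; the delicate point is that after scaling $Y\mapsto s'Y$ the matrices $P(X+ss'Y)$ etc. still satisfy the inverse relations (which follows because the scaling $Y\mapsto s'Y$ is a ring endomorphism of $A[Y]$ by Proposition \ref{931g}(ii), hence preserves the relation $PP^{-1}=I$). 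A secondary nuisance is keeping track of the $\sigma$-twisting factors $s\sigma(s)\sigma^2(s)\cdots$ that appear in Lemma \ref{lema1720}; since each such factor is a product of elements of $R-\mm$ (using $\clubsuit$), it remains outside $\mm$, so none of this affects the conclusion $\mathfrak{q}\not\subseteq\mm$. Once $\mathfrak{q}=R$ is established, specialization $Y\mapsto -X$ finishes the proof.
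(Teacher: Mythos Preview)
Your proposal is correct and follows essentially the same Quillen--Vaserstein patching strategy as the paper: define a ``Quillen ideal'' of $R$ (you use $\mathfrak{q}=\{a:F(X+aY)\sim F(X)\text{ over }A[Y]\}$, while the paper uses the evaluation version $I=\{a:F(f)\sim F(g)\text{ whenever }f-g\in aA^n\}$), show it meets the complement of every maximal ideal by introducing the auxiliary $Y$-variables, clearing denominators via Lemma~\ref{lema1720}, and then specialize (you set $Y\mapsto -X$; the paper sets $f=X$, $g=0$). The invertibility bookkeeping and the $\sigma$-twisted factors $s\sigma(s)\cdots$ that you flag as obstacles are precisely the points the paper handles in its argument.
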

\begin{proof}
$\Rightarrow)$: Evident.

$\Leftarrow)$: We denote $I$ the set of elements $a\in R$ with the following property:
\begin{center}
Given $f=(f_1,\ldots,f_n),g=(g_1,\ldots,g_2)\in A^{n}$ with $f-g\in a\,A^{n}$, then $F(f)\sim
F(g)$.
\end{center}

$F(f)$ represents the evaluation $x_i\mapsto f_i$, $1\leq i\leq n$, on the matrix $F(X)$. We claim
that $I$ is an ideal of $R$. In fact, let $a,b\in I$ and $f-g\in(a-b)A^{n}$, then $f-g=(a-b)h$,
with $h\in A^{n}$, so $f-(g-bh)=ah\in a\,A^{n}$, and hence $F(f)\sim F(g-bh)$. But $g-(g-bh)=bh\in
b\,A^{n}$, so $F(g-bh)\sim F(g)$, whence, $a-b\in I$. Let $r\in R$, $a\in I$ and $f-g\in
ar\,A^{n}\subseteq a\,A^{n}$, therefore $F(f)\sim F(g)$, and this means that $ar\in I$.

If we show that $I=R$, then for every $f,g\in A^{n}$, $F(f)\sim F(g)$, in particular, if
$f=(x_1,\ldots,x_n)$ and $g=(0,\ldots,0)$, we obtain $F(X)\sim F(0)$. Let $\mm\in\Max R$; there
exists $\overline{G(X)}\in\GL_r(R_{\mm}[x_1,\dots,x_n;\overline{\sigma}])$ and $\overline{H(X)}\in
\GL_s(R_{\mm}[x_1,\dots,x_n;\overline{\sigma}])$ such that
\[
\overline{F(X)}=\overline{G(X)}\, \overline{F(0)}\, \overline{H(X)}.
\]
Introducing the Ore extension $R_{\mm}[x_1,\dots,x_n;y_1,\dots,y_n;\overline{\sigma}]$, i.e.,
$x_ix_j=x_jx_i$, $ x_iy_j=y_jx_i$ and $y_iy_j=y_jy_i$ for $1\leq i,j\leq n$, and $y_i\frac{r}{s}:=
\overline{\sigma}(\frac{r}{s})y_i=\frac{\sigma(r)}{\sigma(s)}y_i$, we obtain, from Proposition
\ref{931g}, that
\begin{equation}\label{matsxy}
\overline{F(X+Y)}=\overline{G(X+Y)}\, \overline{F(0)}\, \overline{H(X+Y)}, \text{ where
}Y:=(y_1,\ldots,y_n).
\end{equation}
Since $\overline{F(0)}=\overline{G(X)}^{-1}\overline{F(X)}\ \overline{H(X)}^{-1}$, we get
\[
\overline{F(X+Y)}=\overline{G(X+Y)}\ \overline{G(X)}^{-1}\overline{F(X)}\ \overline{H(X)}^{-1}\,
\overline{H(X+Y)}.
\]
Denote $G^*:=\overline{G(X+Y)}\ \overline{G(X)}^{-1}$ and $H^*:=
\overline{H(X)}^{-1}\,\overline{H(X+Y)}$. Observe that $G^*$ has the form
\[
\overline{G_0(X)}+\overline{G_1(X)}y^{\alpha_1}+\cdots+ \overline{G_\ell(X)}y^{\alpha_\ell},
\]
with $\overline{G_i(X)}\in M_{r}(R_{\mm}[x_1,\dots,x_n;\overline{\sigma}])$, for every
$i=1,\ldots,\ell$, where $\overline{G_0(X)}$ is the identity matrix, and
$y^{\alpha_i}:=y_1^{\alpha_{i1}}\cdots y_n^{\alpha_{in}}$, for every $1\le i\le \ell$. Moreover,
\begin{equation}\label{cixy}
\overline{G_i(X)}y^{\alpha_i}=\overline{E_0}y^{\alpha_i}+\cdots+\overline{E_{i_j}}
y^{\alpha_i}x^{\beta_{i_j}},\text{ where } \overline{E_k}\in M_r(R_{\mm}),\text{ for } 0\le k\le
i_j.
\end{equation}
Taking a common denominator we find $s'\in S$ and matrices $D_k\in M_r(R)$ such that
\begin{center}
$\overline{E_k}=\frac{D_k}{s'}=\frac{D_k\sigma(s')\sigma^2(s')\cdots \sigma^{\alpha_{i1}+\cdots
+\alpha_{in}-1}(s')}{s'\sigma(s')\sigma^2(s')\cdots \sigma^{\alpha_{i1}+\cdots
+\alpha_{in}-1}(s')}$,
\end{center}
so we can assume that $\overline{E_k}=\frac{D_k}{s'\sigma(s')\sigma^2(s')\cdots
\sigma^{\alpha_{i1}+\cdots +\alpha_{in}-1}(s')}$

Hence, replacing $Y$ by $s'Y$ we get that $\overline{G(X+s'Y)}\: \overline{G(X)}^{-1}$ is the image
of a matrix with entries over $R[x_1,\dots,x_n;y_1,\dots,y_n;\sigma]$. In a similar way we can do
with $\overline{H(X)}^{-1}\, \overline{H(X+Y)}$. Thus, we can suppose that
\[
\overline{G(X+s'Y)}\:\overline{G(X)}^{-1}\quad\text{ and }\quad
\overline{H(X)}^{-1}\,\overline{H(X+s'Y)}
\]
are images of invertible matrices
\[
\Gamma(X,Y)\quad\text{ and }\quad\Delta(X,Y),
\]
respectively, with entries in $R[x_1,\dots,x_n;y_1,\dots,y_n;\sigma]$, where $\Gamma(X,0)$ and
$\Delta(X,0)$ are identities matrices.

On $R_{\mm}[x_1,\dots,x_n;y_1,\dots,y_n;\overline{\sigma}]$, we have the equation
\[
\overline{F(X+s'Y)}=\overline{G(X+s'Y)}\:\overline{G(X)}^{-1}
\overline{F(X)}\:\overline{H(X)}^{-1}\, \overline{H(X+s'Y)},
\]
and on $R[x_1,\dots,x_n;\sigma]$,
\[
F(X)=\Gamma(X,0)F(X)\Delta(X,0).
\]
Taking $B:=R[x_1,\dots,x_n;\sigma]$ in Lemma \ref{lema1720}, there exists $s''\in R-\mm$ such that
for $s:=s's''$, we have the equation
\[
F(X+sY)=\Gamma(X,s''Y)F(X)\Delta(X,s''Y)
\]
in the Ore extension $R[x_1,\dots,x_n;y_1,\dots,y_n;\sigma]$. Now if $f,g,h\in A^{n}$ are such that
$f-g=sh$, we have
\[
F(f)=F(g+sh)=\Gamma(g,s''h)F(g)\Delta(g,s''h),
\]
where $\Gamma(g,s''h)$ and $\Delta(g,s''h)$ are invertible; then $F(f)\sim F(g)$ and so $s\in I$.
We have showed that for every $\mm\in\Max R$ there exists $s\in I$ with $s\notin\mm$, i.e., $I=R$.

\end{proof}

\section{Quillen's patching theorem}

Now we will study another classical result of commutative algebra for the Ore extensions of type
$A:=R[x_1,\dots,x_n;\sigma]$, with $R$ commutative: the famous Quillen's patching theorem. For this
we will adapt the method studied in \cite{Kunz}. Let $B$ be a ring and consider two exact sequences
of $B$-modules
\begin{align*}
&0\to K_1\xto{\beta_1}F_1\xto{\alpha_1}M_1\to 0\\
&0\to K_2\xto{\beta_2}F_2\xto{\alpha_2}M_2\to 0.
\end{align*}
where $F_1, F_2$ are free.

\begin{proposition}\label{prop1}
If $i:M_1\too M_2$ is an isomorphism, then there exists $\alpha\in\Aut(F_1\oplus F_2)$ such that
following diagram
\begin{equation}\label{dia1}
\begin{diagram}
\node{F_1\oplus F_2}\arrow{e,t}{(\alpha_1,0)}\arrow{s,l}{\alpha}%
\node{M_1}\arrow{s,r}{i}\\
\node{F_1\oplus F_2}\arrow{e,b}{(0,\alpha_2)}\node{M_2}
\end{diagram}
\end{equation}
is commutative. Identifying $K_j$ with $\beta_j(K_j)\subseteq F_j$
$(j=1,2)$, $\alpha(K_1\oplus F_2)=F_1\oplus K_2$.%
\end{proposition}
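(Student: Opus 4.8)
The statement is a standard ``patching'' lemma: two short exact sequences with free middle terms whose cokernels are isomorphic can be compared by an automorphism of the direct sum of the middle terms. The plan is to build $\alpha$ directly from the data. First I would use projectivity/freeness of $F_1$ to lift the composite $i\circ\alpha_1\colon F_1\to M_2$ along the surjection $\alpha_2\colon F_2\to M_2$, obtaining a $B$-homomorphism $\varphi\colon F_1\to F_2$ with $\alpha_2\varphi=i\alpha_1$. Symmetrically, lift $i^{-1}\circ\alpha_2\colon F_2\to M_1$ along $\alpha_1$ to get $\psi\colon F_2\to F_1$ with $\alpha_1\psi=i^{-1}\alpha_2$, equivalently $\alpha_2=i\alpha_1\psi$.

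Next I would write down the candidate automorphism of $F_1\oplus F_2$ in matrix (row-action) form. Using the elementary/block construction, set
\[
\alpha:=\begin{pmatrix}1&\varphi\\0&1\end{pmatrix}\begin{pmatrix}1&0\\-\psi&1\end{pmatrix}\begin{pmatrix}1&\varphi\\0&1\end{pmatrix},
\]
which is a product of block-triangular (hence invertible) maps, so $\alpha\in\Aut(F_1\oplus F_2)$; one computes $\alpha=\begin{pmatrix}1-\varphi\psi&2\varphi-\varphi\psi\varphi\\-\psi&1-\psi\varphi\end{pmatrix}$. I would then check commutativity of the diagram, i.e.\ $(0,\alpha_2)\circ\alpha=i\circ(\alpha_1,0)$: applying the bottom-right composite to $(u,v)\in F_1\oplus F_2$ one gets $\alpha_2$ of the second coordinate of $\alpha(u,v)$, and using $\alpha_2\psi^{-1}$-type relations together with $\alpha_2\varphi=i\alpha_1$ and $\alpha_1\psi=i^{-1}\alpha_2$ this collapses to $i\alpha_1(u)$, as required. (In fact with the Whitehead-style three-factor product the verification is forced by the two lifting identities; I would present it as a short direct computation.)

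Finally I would deduce the statement about the kernels. Since the diagram commutes and $i$ is an isomorphism, $\alpha$ carries $\ker(\alpha_1,0)$ onto $\ker(0,\alpha_2)$. Under the identification of $K_j$ with $\beta_j(K_j)\subseteq F_j$, we have $\ker(\alpha_1,0)=K_1\oplus F_2$ and $\ker(0,\alpha_2)=F_1\oplus K_2$ inside $F_1\oplus F_2$; hence $\alpha(K_1\oplus F_2)=F_1\oplus K_2$. The main obstacle is nothing deep—only the bookkeeping in the commutativity check, where one must be careful that the paper uses row notation for homomorphisms, so the block matrices act on the right and the order of composition in the three-factor product must be tracked accordingly; once the two lifts $\varphi,\psi$ are fixed, everything else is formal.
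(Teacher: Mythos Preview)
Your argument is correct and is precisely the classical construction: lift $i\alpha_1$ and $i^{-1}\alpha_2$ through the free modules to obtain $\varphi,\psi$, assemble the Whitehead-type product of block elementary matrices to get an honest automorphism, and read off the kernel statement from commutativity. The paper itself does not give an independent proof here; it simply refers to Kunz and remarks that commutativity of $B$ is not used, and the proof in Kunz is exactly the one you wrote down (same lifts, same three-factor elementary product), so your approach coincides with the intended one.
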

\begin{proof}
The proof is exactly as in \cite{Kunz} and it is not necessary to assume that $B$ is commutative.
\end{proof}

\begin{corollary}\label{coroLocalQuillen}
Let $B$ be a ring and consider two exact sequences of $B$-modules
\begin{equation}\label{sucesiones}
F'_j\xrightarrow{\beta_j}F_j\xrightarrow{\alpha_j}M_j\too 0\qquad (j=1,2),
\end{equation}
where $F_j,F'_j$ are free $B$-modules. Then, $M_1\cong M_2$ if and only if there exist
$\alpha\in\Aut(F_1\oplus F_2)$ and $\beta\in\Aut(F'_1\oplus F_2\oplus F_1\oplus F'_2)$ such that
the following diagram
\begin{equation}\label{dia2}
\begin{diagram}
\node{F'_1\oplus
F_2\oplus F_1\oplus F'_2\quad}\arrow{e,t}{(\beta_1\oplus i_{F_2},0)}\arrow{s,l}{\beta}%
\node{\quad F_1\oplus F_2}\arrow{s,r}{\alpha}\\
\node{F'_1\oplus F_2\oplus F_1\oplus F'_2\quad}\arrow{e,b}{(0,i_{F_1}\oplus\beta_2)}\node{\quad
F_1\oplus F_2}
\end{diagram}
\end{equation}
commutes.
\end{corollary}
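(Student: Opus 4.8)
The plan is to derive Corollary~\ref{coroLocalQuillen} from Proposition~\ref{prop1} by a two-step reduction: first turn the one-step presentations in \eqref{sucesiones} into genuine short exact sequences by splitting off the images of the $\beta_j$, and then apply Proposition~\ref{prop1} to the resulting short exact sequences. Concretely, for $j=1,2$ set $K_j:=\Ker(\alpha_j)=\IM(\beta_j)\subseteq F_j$, so that we have short exact sequences $0\to K_j\to F_j\xto{\alpha_j}M_j\to 0$. The surjection $\beta_j:F'_j\to K_j$ need not be an isomorphism, so $K_j$ is only finitely generated, not free; this is handled by the standard trick of adding the kernel of $\beta_j$ as a direct summand on both sides, i.e. replacing $F'_j$ by $F'_j\oplus(\text{something})$ — but here the cleaner route is to note that $F'_1\oplus F_2\oplus F_1\oplus F'_2$ is engineered precisely so that the two maps $(\beta_1\oplus i_{F_2},0)$ and $(0,i_{F_1}\oplus\beta_2)$ in \eqref{dia2} are surjections onto $F_1\oplus F_2$ with free source, and one wants $\beta$ to match their kernels. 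I would work with these two presentations of $F_1\oplus F_2$ directly.

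The key steps, in order, are as follows. First I would record the short exact sequences
\[
0\to L_j\to F'_1\oplus F_2\oplus F_1\oplus F'_2 \xto{\ \gamma_j\ } F_1\oplus F_2\to 0\qquad(j=1,2),
\]
where $\gamma_1:=(\beta_1\oplus i_{F_2},0)$ and $\gamma_2:=(0,i_{F_1}\oplus\beta_2)$, and $L_j:=\Ker(\gamma_j)$; note $\gamma_j$ is surjective because $\beta_j$ surjects onto $K_j$ while $\IM(\alpha_j)=M_j$ forces $F_j=K_j$ — wait, more carefully, $\gamma_1$ is surjective since its $F_2$-component is the identity and its $F_1$-component is $\beta_1$ composed with the inclusion $K_1\hookrightarrow F_1$, which is surjective onto $F_1$ only if $\beta_1$ is, so instead one observes that the relevant composite $\alpha_1\gamma_1 = \alpha_1\beta_1\oplus 0 = 0$, hence the correct statement is that $\gamma_j$ need not be surjective and one should think of these as presentations of $F_1\oplus F_2$ only after composing with $(\alpha_1,0)$ and $(0,\alpha_2)$. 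The honest approach: compose to get the presentations
\[
F'_1\oplus F_2\oplus F_1\oplus F'_2 \xrightarrow{\ (\alpha_1,0)\circ(\beta_1\oplus i_{F_2},0)\ } \cdots
\]
— this is $0$, so that is not it either. The genuinely right bookkeeping is that $(\alpha_1,0):F_1\oplus F_2\to M_1$ has kernel $K_1\oplus F_2$, and $(\beta_1\oplus i_{F_2},0):F'_1\oplus F_2\oplus F_1\oplus F'_2\to F_1\oplus F_2$ has image exactly $K_1\oplus F_2$ (since $\IM\beta_1=K_1$), so the composite $F'_1\oplus F_2\oplus F_1\oplus F'_2\to F_1\oplus F_2\to M_1$ is zero and, crucially, $(\beta_1\oplus i_{F_2},0)$ gives a presentation of $(F_1\oplus F_2)/(K_1\oplus F_2)\cong M_1$ with free middle term. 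Symmetrically $(0,i_{F_1}\oplus\beta_2)$ presents $M_2$.

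So the real argument is: apply Proposition~\ref{prop1} to the two short exact sequences $0\to K_1\oplus F_2\to F_1\oplus F_2\xto{(\alpha_1,0)}M_1\to 0$ and $0\to F_1\oplus K_2\to F_1\oplus F_2\xto{(0,\alpha_2)}M_2\to 0$. Since $M_1\cong M_2$ by hypothesis, Proposition~\ref{prop1} yields $\alpha\in\Aut(F_1\oplus F_2)$ making the square with top $(\alpha_1,0)$, bottom $(0,\alpha_2)$, verticals $\alpha$ and $i$ commute, and moreover $\alpha(K_1\oplus F_2)=F_1\oplus K_2$. It remains to produce $\beta$: both $K_1\oplus F_2$ and $F_1\oplus K_2$ are presented by the same free module $F'_1\oplus F_2\oplus F_1\oplus F'_2$ via $\beta_1\oplus i_{F_2}$ and $i_{F_1}\oplus\beta_2$ respectively (after restricting codomains), and $\alpha$ carries one submodule isomorphically onto the other; lifting this isomorphism of the two submodules — both of which are presented by the \emph{same} free module — through the two surjections gives an automorphism $\beta$ of $F'_1\oplus F_2\oplus F_1\oplus F'_2$ with $\alpha\circ(\beta_1\oplus i_{F_2},0)=(0,i_{F_1}\oplus\beta_2)\circ\beta$, which is exactly the commutativity of \eqref{dia2}. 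The lift is an automorphism (not merely an endomorphism) because one can build it degreewise on the summands and invert using $\alpha^{-1}$ symmetrically, or because a surjective endomorphism argument applies; this verification is routine diagram-chasing and requires no commutativity of $B$. The converse direction is immediate: given such $\alpha$ and $\beta$, diagram \eqref{dia2} exhibits $M_1=\Coker(\beta_1\oplus i_{F_2},0)\cong\Coker(0,i_{F_1}\oplus\beta_2)=M_2$.

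The main obstacle I anticipate is the construction of the automorphism $\beta$: Proposition~\ref{prop1} hands us $\alpha$ and the submodule identity $\alpha(K_1\oplus F_2)=F_1\oplus K_2$, but promoting the induced isomorphism $K_1\oplus F_2\xrightarrow{\sim}F_1\oplus K_2$ to an \emph{automorphism} of the common free cover $F'_1\oplus F_2\oplus F_1\oplus F'_2$ (rather than just a pair of compatible surjections) needs the observation that the map can be assembled block-wise so that it has a two-sided inverse; one exploits that the $F_2$ and $F_1$ blocks are mapped by identities, reducing the problem to lifting $\beta_1$ and $\beta_2$ compatibly. Everything else is formal, works verbatim over the noncommutative ring $B$, and in particular applies to $B=A$ and to its localizations $A_{\mm}$, which is the use we will make of it in the patching argument.
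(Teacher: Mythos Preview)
Your outline is correct and is exactly the argument the paper intends (the paper itself gives no proof beyond citing \cite{Kunz}, where this corollary is deduced from the preceding proposition). The forward direction is indeed: apply Proposition~\ref{prop1} to the short exact sequences $0\to K_j\to F_j\xto{\alpha_j}M_j\to 0$ (with $K_j=\IM\beta_j$) to obtain $\alpha\in\Aut(F_1\oplus F_2)$ satisfying $\alpha(K_1\oplus F_2)=F_1\oplus K_2$; your converse via cokernels is fine.

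The one point that remains fuzzy in your write-up is the construction of $\beta$. You flag it as the main obstacle and propose to resolve it by ``block-wise'' assembly and ``routine diagram-chasing'', but the clean way---and the way Kunz does it---is simply a \emph{second} application of Proposition~\ref{prop1}, now to the short exact sequences
\[
0\longrightarrow \Ker\beta_1\oplus 0 \longrightarrow F'_1\oplus F_2 \xrightarrow{\ \beta_1\oplus i_{F_2}\ } K_1\oplus F_2 \longrightarrow 0,
\]
\[
0\longrightarrow 0\oplus \Ker\beta_2 \longrightarrow F_1\oplus F'_2 \xrightarrow{\ i_{F_1}\oplus \beta_2\ } F_1\oplus K_2 \longrightarrow 0,
\]
with the isomorphism $\alpha|_{K_1\oplus F_2}:K_1\oplus F_2\xrightarrow{\sim}F_1\oplus K_2$ playing the role of $i$. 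Proposition~\ref{prop1} then yields $\beta\in\Aut\bigl((F'_1\oplus F_2)\oplus(F_1\oplus F'_2)\bigr)$ making the square with rows $(\beta_1\oplus i_{F_2},0)$ and $(0,i_{F_1}\oplus\beta_2)$ and right vertical $\alpha|$ commute; post-composing the right vertical with the inclusion into $F_1\oplus F_2$ gives exactly diagram~\eqref{dia2}. This removes the need for any ad hoc lifting argument and explains why the free module $F'_1\oplus F_2\oplus F_1\oplus F'_2$ appears with that particular grouping of summands.
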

\begin{proof}
See \cite{Kunz}.
\end{proof}

\begin{remark}
Now we can consider that $M_1$ and $M_2$ are finitely presented $B$-modules
\begin{align*}
&F_1'\xto{\beta_1}F_1\xto{\alpha_1}M_1\to 0\\
&F_2'\xto{\beta_2}F_2\xto{\alpha_2}M_2\to 0;
\end{align*}
with respect to the canonical bases, $\beta_1$ is given by a matrix $B_1\in M_{m'\times m}(B)$ and
$\beta_2$ by $B_2\in M_{n'\times n}(B)$. The matrices that represent the homomorphisms in the rows
of (\ref{dia2}) are given by
\begin{equation}\label{equ3}
\begin{bmatrix}
\begin{tabular}{c|c}
  $B_1$ & $0$ \\
  \hline
  $0$ & $I_n$ \\
  \hline
\end{tabular}\\
0
\end{bmatrix}
\qquad
\text{and}
\qquad
\begin{bmatrix}
0\\
\begin{tabular}{c|c}
  \hline
  $I_m$ & $0$ \\
  \hline
  $0$ & $B_2$ \\
\end{tabular}
\end{bmatrix},
\end{equation}
and the matrices of homomorphisms in the columns are in $M_{r}(B)$ and $M_s(B)$, where
$r:=m+n+m'+n'$ and $s:=m+n$. Therefore, Corollary \ref{coroLocalQuillen} says that the modules
$M_1$ and $M_2$ are isomorphic if and only if the matrices in (\ref{equ3}) are equivalent.
\end{remark}

Note that  $A_{\mm}$ is a right $A$-module and if $M$ is a left $A$-module, then we denote
\begin{center}
$M_{\mm}:=A_{\mm}\otimes_{A}M=R_{\mm}[x_1,\dots,x_n;\overline{\sigma}]\otimes_{A}M$.
\end{center}
If $N$ is a right $R$-module, then we denote
\begin{center}
$N[x_1,\dots,x_n;\sigma]:=N\otimes_R A$.
\end{center}

\begin{theorem}[\textbf{Quillen's patching theorem}]\label{tpq}
Let $R$ be a commutative ring and $A:=R[x_1,\dots,x_n;\sigma]$. Let $M$ be a finitely presented
$A$-module. $M$ is extended from $R$ if and only if $M_{\mm}$ is extended from $R_{\mm}$, for every
$\mm\in\Max(R)$.
\end{theorem}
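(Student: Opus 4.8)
The plan is to mimic Quillen's classical argument, using the matrix reformulation of the ``extended'' condition from the preceding remark (Corollary \ref{coroLocalQuillen} translated into matrix equivalence) together with the Ore-extension Vaserstein theorem (Theorem \ref{GVasersteinTeorem}) as the gluing device. The forward implication is easy: if $M\cong A\otimes_R M_0$, then localizing at $\mm$ gives $M_\mm\cong A_\mm\otimes_A(A\otimes_R M_0)\cong A_\mm\otimes_R M_0\cong A_\mm\otimes_{R_\mm}(R_\mm\otimes_R M_0)$, so $M_\mm$ is extended from $R_\mm$. The real content is the converse.

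For the converse, first I would reduce to a statement about matrices. Since $M$ is finitely presented, fix a presentation $F_1'\xrightarrow{\beta_1}F_1\xrightarrow{\alpha_1}M\to 0$ over $A$; then $M/\langle X\rangle M$ is a finitely presented $R$-module, call it $M_0$, presented by $\beta_1(0)$ (the matrix $B_1$ with the variables set to $0$). Set $N:=A\otimes_R M_0$, which has the presentation obtained by base-changing $\beta_1(0)$ to $A$. By the remark following Corollary \ref{coroLocalQuillen}, $M\cong N$ (i.e.\ $M$ is extended from $R$) if and only if the two block matrices in (\ref{equ3}) built from $B_1$ and from $B_1(0)$ are equivalent over $A$; denote these $\Phi(X)$ and $\Psi(X)$. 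Note $\Psi(X)$ already has constant entries, and moreover $\Phi(0)=\Psi(0)$, so after an elementary manipulation the question becomes whether a single matrix $\Theta(X)$ (obtained by suitably combining $\Phi(X)$ and $\Psi(X)$, e.g.\ $\Theta(X):=\Phi(X)\Psi(X)^{\sim}$ after padding to make things square) satisfies $\Theta(X)\sim\Theta(0)$ over $A$.

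Now apply Theorem \ref{GVasersteinTeorem}: $\Theta(X)\sim\Theta(0)$ over $A$ if and only if $\overline{\Theta(X)}\sim\overline{\Theta(0)}$ over $A_\mm=R_\mm[x_1,\dots,x_n;\overline\sigma]$ for every $\mm\in\Max R$. But the hypothesis says $M_\mm$ is extended from $R_\mm$; since $M_\mm$ is finitely presented over $A_\mm$ with presentation matrix $\overline{B_1}$ and $M_\mm/\langle X\rangle M_\mm\cong (M/\langle X\rangle M)_\mm\cong (M_0)_\mm$ (using exactness of localization and Proposition \ref{8.1.3}(ii)), the matrix reformulation applied over $A_\mm$ says precisely that $\overline{\Phi(X)}\sim\overline{\Psi(X)}$, equivalently $\overline{\Theta(X)}\sim\overline{\Theta(0)}$, over $A_\mm$. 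Feeding this back through Vaserstein gives $\Theta(X)\sim\Theta(0)$ over $A$, hence $M\cong N=A\otimes_R M_0$, as desired.

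The main obstacle I expect is purely bookkeeping: making the passage ``$M_\mm$ extended from $R_\mm$'' $\Longleftrightarrow$ ``$\overline{\Phi(X)}\sim\overline{\Psi(X)}$'' rigorous requires knowing that the $R_\mm$-module from which $M_\mm$ is extended can be taken to be $(M_0)_\mm$ with its base-changed presentation, rather than some unrelated module; this uses Proposition \ref{8.1.3}(ii) (which identifies $M_0$ canonically as $M/\langle X\rangle M$) together with the fact that localization commutes with $\langle X\rangle$-reduction and with tensoring, plus the observation that two finitely presented modules with the same presentation matrix are isomorphic. One must also check that the ``combining'' step producing $\Theta$ from $\Phi$ and $\Psi$ is compatible with localization and with setting $X=0$ (so that $\Theta(0)$ really is the localization-independent constant matrix), but this is straightforward since all operations involved — padding by identity blocks, multiplying by the adjugate-type inverse on the constant matrix $\Psi$ — are natural in the ring. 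Once the dictionary is set up, Vaserstein's theorem does all the work.
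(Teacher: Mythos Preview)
Your approach is essentially the paper's: reduce to the matrix criterion via Corollary~\ref{coroLocalQuillen}, then invoke Vaserstein (Theorem~\ref{GVasersteinTeorem}) to pass between the global and local equivalences. One simplification you missed: the paper observes directly that the second block matrix $G$ (your $\Psi$) is equivalent to $F(0)$ (your $\Phi(0)$) by row and column permutations, so the question ``$F\sim G$?'' is already of the form ``$F(X)\sim F(0)$?'', which is precisely Vaserstein's hypothesis. Your claim that $\Phi(0)=\Psi(0)$ is not literally true---the nonzero blocks sit in different positions---but $\Psi\sim\Phi(0)$ by permutation, and that is all you need. The detour through a combined matrix $\Theta$ with ``padding to make things square'' and an ``adjugate-type inverse'' of the rectangular $\Psi$ is both vague and unnecessary; drop it and use the permutation observation instead.
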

\begin{proof}
There exists an exact sequence of $A$-modules
\begin{equation}\label{suc_1}
A^p\xrightarrow{\beta_1}A^q\xrightarrow{\alpha_1}M\too 0.
\end{equation}
Tensoring by $A/\langle X\rangle$ we obtain the exact sequence of $R$-modules
\begin{equation}\label{suc_2}
R^p\xrightarrow{\ol{\beta}_1}R^q\xrightarrow{\ol{\alpha}_1}M/\langle X\rangle M\too0.
\end{equation}
If $B\in M_{p\times q}(A)$ is the matrix of $\beta_1$ with respect to the canonical bases, then
$B(0)$ is the matrix of $\ol{\beta}_1$. From (\ref{suc_2}) we get an exact sequence of $A$-modules,
where $N:=M/\langle X\rangle M$: {\footnotesize
\begin{equation}\label{suc_3}
R^p[x_1,\dots,x_n;\sigma]\xrightarrow{\ol{\beta}_1[X]}R^q[x_1,\dots,x_n;\sigma]
\xrightarrow{\ol{\alpha}_1[X]} N[x_1,\dots,x_n;\sigma]\to 0.
\end{equation}}
Note that $\ol{\beta}_1[X]:=\overline{\beta_1}\otimes i_A$ and
$\ol{\alpha}_1[X]:=\overline{\alpha_1}\otimes i_A$. The sequence (\ref{suc_3}) can be identified
with the exact sequence of $A$-modules
\begin{equation}\label{suc_4}
A^p\xrightarrow{\beta_2}A^q\xrightarrow{\alpha_2} N[x_1,\dots,x_n;\sigma]\too 0,
\end{equation}
where $\beta_2$ is described by the matrix $B(0)$. From Corollary \ref{coroLocalQuillen}, we have
$M\cong N[x_1,\dots,x_n;\sigma]$ if and only if the $2(p+q)\times(2q)$-matrices
\[
F:=
\begin{bmatrix}
\begin{tabular}{c|c}
  $B$ & $0$ \\
  \hline
  $0$ & $I_q$ \\
  \hline
\end{tabular}\\
0
\end{bmatrix}
\qquad
\text{and}
\qquad
G:=\begin{bmatrix}
0\\
\begin{tabular}{c|c}
  \hline
  $I_q$ & $0$ \\
  \hline
  $0$ & $B(0)$ \\
\end{tabular}
\end{bmatrix}
\]
are equivalent. Note that $G$ is equivalent to $F(0)$ (permuting rows and columns). Therefore, by
Theorem \ref{GVasersteinTeorem}, $M\cong N[x_1,\dots,x_n;\sigma]$ if and only if $F$ and $F(0)$ are
locally equivalent for every $\mm\in\Max(R)$. Since the exact sequences (\ref{suc_1}) and
(\ref{suc_4}) are consistent with respect to the localization by $\mm$, we get that $M$ is extended
from $R$ if and only if $M_{\mm}$ is extended from $R_{\mm}$, for every $ \mm\in\Max(R)$.
\end{proof}

\begin{corollary}\label{945}
Let $R$ be a commutative ring and $A:=R[x_1,\dots,x_n;\sigma]$. If for each $\mm\in\Max R$,
$A_{\mm}$ is $\mathcal{E}$ with respect to $R_\mm$, then $A$ is $\mathcal{E}$.
\end{corollary}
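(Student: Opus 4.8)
The plan is to deduce this directly from Quillen's patching theorem (Theorem \ref{tpq}), together with the standard fact that base change preserves finitely generated projective modules. I would start with an arbitrary finitely generated projective $A$-module $M$ and aim to show $M\in\pry^R(A)$. Since $M$ is a direct summand of some free module, say $A^n=M\oplus M'$, it is the image of an idempotent $A$-endomorphism of $A^n$, so there is an exact sequence $A^n\to A^n\to M\to 0$; in particular $M$ is finitely presented, which is exactly what is needed to apply Theorem \ref{tpq}.

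Next I would fix $\mm\in\Max R$ and localize. Recall $A_{\mm}\cong R_{\mm}[x_1,\dots,x_n;\overline{\sigma}]$, and that $A_{\mm}$ is flat as a right $A$-module, so tensoring the splitting $A^n=M\oplus M'$ with $A_{\mm}\otimes_A-$ gives $A_{\mm}^n\cong M_{\mm}\oplus M'_{\mm}$; hence $M_{\mm}$ is a finitely generated projective $A_{\mm}$-module. By hypothesis $A_{\mm}$ is $\mathcal{E}$ with respect to $R_{\mm}$, so $M_{\mm}$ is extended from $R_{\mm}$. Since $\mm$ was arbitrary, $M_{\mm}$ is extended from $R_{\mm}$ for every $\mm\in\Max R$. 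Applying Theorem \ref{tpq} to the finitely presented module $M$ then yields that $M$ is extended from $R$. As $M$ ranged over all finitely generated projective $A$-modules, this shows $\pry(A)\subseteq\pry^R(A)$, i.e., $A$ is $\mathcal{E}$.

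I do not expect a genuine obstacle here, since the substantive work has already been absorbed into Theorem \ref{tpq} (and, behind it, the Vaserstein theorem \ref{GVasersteinTeorem}). The only two points deserving an explicit word are that a finitely generated projective module is automatically finitely presented (so that Quillen's patching is applicable) and that the canonical map $A\to A_{\mm}$ is flat so that $M\mapsto M_{\mm}$ sends finitely generated projectives to finitely generated projectives; both are routine. If anything, the mild care point is purely bookkeeping: checking that the localized ring $A_{\mm}$ is again an Ore extension of the same type, so that the phrase ``$A_{\mm}$ is $\mathcal{E}$ with respect to $R_{\mm}$'' is meaningful and Theorem \ref{tpq} applies verbatim at the level of $R_\mm$; this was already recorded in the discussion preceding condition $\clubsuit$.
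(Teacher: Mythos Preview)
Your argument is correct and is exactly the paper's approach: take $M\in\pry(A)$, observe it is finitely presented, localize to get $M_{\mm}\in\pry(A_{\mm})$, use the hypothesis to conclude $M_{\mm}$ is extended from $R_{\mm}$, and then invoke Theorem~\ref{tpq}. Your additional remarks on finite presentation and on $A_{\mm}$ being an Ore extension of the same type are just the details the paper leaves implicit.
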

\begin{proof}
Let $M\in\pry(A)$, then $M_{\mm}\in\pry(A_{\mm})$, and hence, by the hypothesis, $M_{\mm}$ is
extended from $R_{\mm}$, for every $\mm\in\Max R$. Using Theorem \ref{tpq} (note that $M$ is
finitely presented as $A$-module), $M$ is extended from $R$, and hence, $A$ is $\mathcal{E}$.
\end{proof}

\section{Quillen-Suslin theorem}

This last section concerns with Quillen-Suslin's theorem. Here $R$ is non-commutative but some
other extra conditions on it are assumed as well as over $\sigma$.

\begin{theorem}[Horrocks' theorem]
Let $R$ be a left regular domain and $Z$ its center. Suppose that $Z$ is Noetherian, $R$ is
finitely generated over $Z$ and  $\sigma$ is an automorphism of $R$ of finite order $d$, with $d$
invertible in $Z$. Suppose that $P\in \pry(A)$ is stably extended from $R$ and the rank of $P$ is
at least 2. Then $P$ is extended from $R$.
\end{theorem}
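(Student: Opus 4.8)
The plan is to mimic the classical proof of Horrocks' theorem (as in Lam's book), carefully tracking the non-commutative ingredients. The key structural observation is that we may localize: set $Z$ the center of $R$, and for a maximal ideal $\mm$ of $Z$ consider $R_\mm := (Z-\mm)^{-1}R$. Since $\sigma$ restricts to an automorphism of $Z$ (being an automorphism of $R$ it preserves the center) and has finite order $d$ invertible in $Z$, the hypothesis $\clubsuit$-type condition can be arranged on $Z$, so that $A_\mm \cong R_\mm[x_1,\dots,x_n;\overline\sigma]$ makes sense. By Quillen's patching (Theorem \ref{tpq}) together with Corollary \ref{945}, it suffices to prove the statement after localizing at each $\mm\in\Max Z$; and the localized ring $R_\mm$ is again a left regular domain, module-finite over the local ring $Z_\mm$, with $\sigma$ of order $d$ invertible. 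So the first step I would take is to reduce to the case where $Z$ is a \emph{local} ring, retaining the other hypotheses.

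Next, with $Z$ local (say with maximal ideal $\mm$ and residue field $k$), I would invoke Bass's theorem (Proposition \ref{Bass_ideal}): since $\mm R \subseteq \Rad(R)$ for $R$ module-finite over the local ring $Z$, two projective $R$-modules are isomorphic iff their reductions mod $\mm R$ are. The real work is an induction on the number of variables $n$, reducing to the one-variable case $A = R[x;\sigma]$. For $n=1$: $P$ is stably extended from $R$, meaning $P \oplus A^a \cong Q\otimes_R A$ for some projective $R$-module $Q$; by Proposition \ref{8.1.3}(ii), $P_0 := P/\langle x\rangle P$ is f.g. projective over $R$ and $P\oplus A^a \cong (Q)\otimes_R A$ forces $P_0 \oplus R^a \cong Q \oplus R^a$ (applying $A/\langle x\rangle\otimes_A -$), hence in particular $P_0$ is stably isomorphic to $Q$. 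The aim is to show $P \cong P_0\otimes_R A$. Using the matrix criterion of Theorem \ref{matrixE}, write $P = \langle F(x)\rangle$ for an idempotent $F(x)\in M_s(A)$; the target is $F(x) \approx F(0)$, i.e. conjugacy by an element of $GL_s(A)$. The classical Horrocks argument produces, from the stably-free/stably-extended hypothesis and a patching of the localization $A_x$ (inverting $x$) against the $\mm$-adic completion, an elementary (hence trivial-determinant) automorphism realizing this conjugacy; here rank $\geq 2$ is exactly what is needed so that the relevant unimodular-row/Mennicke-symbol obstruction vanishes and one can work inside $EL_s$ rather than $GL_s$.

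The hard part will be the non-commutative skew-polynomial analogue of the affine patching step: one must glue a trivialization of $P$ over $A_x = R[x,x^{-1};\sigma]$ (a skew Laurent extension, where a Horrocks-over-a-Laurent argument or a direct "affineness" input gives $P_x$ free-extended) with a trivialization over the $\langle x\rangle$-adic completion $\widehat A = R[[x;\sigma]]$ (where extendedness is essentially automatic by Proposition \ref{8.1.3}(i)-style arguments plus Bass, using that $\langle x\rangle\subseteq\Rad(\widehat A)$), along the punctured formal neighborhood. Making "$P_x$ is extended" rigorous is where left regularity of $R$ and the hypothesis that $\sigma$ has finite order $d$ with $d$ invertible really enter — the finite order lets one relate $A$ to a fixed-ring / crossed-product situation and keeps homological dimension under control, and $d$ invertible allows averaging over the cyclic group $\langle\sigma\rangle$. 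I would isolate this as a lemma ("$R[x,x^{-1};\sigma]$ is $\mathcal E$ over $R$ under these hypotheses", or at least that stably-extended rank-$\geq 2$ projectives over it are extended) and then the gluing, the descent from $\widehat A$ back to $A$ via Bass, and the induction on $n$ (peeling off $x_n$, treating $R[x_1,\dots,x_{n-1};\sigma]$ as the new base ring, which is again left regular by the Hilbert syzygy-type theorem for quasi-commutative Ore extensions) are comparatively formal. The only subtlety in the induction is checking that the new base ring still satisfies all hypotheses — center containing a Noetherian subring over which it is module-finite, and the same $\sigma$ of order $d$ — which holds since these properties are stable under adjoining central-commuting variables.
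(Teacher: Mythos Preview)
Your approach is genuinely different from the paper's. The paper does not localize at $\Max Z$, does not invoke Quillen patching, and does not induct on $n$ by peeling off variables. Instead it exploits the finite order of $\sigma$ directly to produce a \emph{commutative central} subalgebra $D := Z^{\sigma}[x_1^d,\ldots,x_n^d]$ of $A$ (here $Z^{\sigma}$ is the fixed subring of $\sigma$ in $Z$; this is where $d$ invertible enters, via Noether's theorem, to get $Z^{\sigma}$ Noetherian and $Z$ module-finite over it), observes that $A$ is module-finite over $D$, and then feeds this into the graded-localization machinery already developed in \cite{Artamonov2} (Theorem~5.32, Corollary~5.36, Proposition~5.35): for a maximal graded ideal $\wp$ of $D$ one has a dichotomy --- either all $x_i^d$ lie in a certain graded ideal $V$, so each $P_{x_i^d}$ is extended with $x_i^d$ monic, or $A_{\wp}^{+}$ is a one-variable skew extension in some $x_i$, where the rank $\geq 2$ hypothesis yields a monic $f$ with $P_f$ extended --- and in both cases Proposition~5.35 of \cite{Artamonov2} closes the argument.

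Your outline, by contrast, has substantive gaps. First, the paper's Vaserstein and Quillen patching theorems (Theorems \ref{GVasersteinTeorem} and \ref{tpq}) are proved only for \emph{commutative} $R$; you cannot invoke them here without reproving them relative to $Z$, and moreover the condition $\clubsuit$ for $\mm\in\Max Z$ amounts to $\sigma(\mm)=\mm$, which fails whenever $\sigma$ permutes maximal ideals nontrivially --- the correct fix is to localize at primes of the fixed ring $Z^{\sigma}$, which is precisely the paper's manoeuvre, not yours. Second, your induction step asserts that the hypotheses pass to $R' := R[x_1,\ldots,x_{n-1};\sigma]$, but the $x_i$ are not central (only the $x_i^d$ are), so identifying the center of $R'$ and verifying that $R'$ is module-finite over it is not ``comparatively formal''. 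Third, and most seriously, the lemma you isolate --- that stably-extended rank-$\geq 2$ projectives over $R[x,x^{-1};\sigma]$ are extended --- is essentially the whole content of the one-variable case; you acknowledge it is ``the hard part'' but give no mechanism beyond a vague appeal to averaging over $\langle\sigma\rangle$, whereas the paper handles exactly this step by citing the existing results of \cite{Artamonov2} rather than arguing from scratch.
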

\begin{proof}
Firstly we recall that the rank of $P$ means the maximal number of $R$-independent elements of $P$,
and $P$ is stably extended from $R$ is there exists $m\geq 0$ such that $P\oplus A^m$ is extended
from $R$.

Denote by $d$  the order of  $\sigma$. The automorphism $\sigma$ is mapping $Z$ onto itself.   Let
$Z^{\sigma}$ be the subalgebra in $Z$  of invariants of  $\sigma$. By Noether's theorem
$Z^{\sigma}$ is Noetherian  and $Z$ is a finitely generated  $Z^{\sigma}$-module. We claim that
$Z^{\sigma}[x_1^d,\ldots, x_n^d]$ is a central subalgebra in $A$ and $A$ is a finitely generated
left $Z^{\sigma}[x_1^d,\ldots, x_n^d]$-module. In fact monomials in $x_1,\ldots,x_n$ commute. Since
$x_ir =\sigma(r)x_i$ for any $i$ and for any $r\in R$, then
 $x_i^d r=\sigma^d(r)x_i^d=  rx^d$. Hence each element $x_i^d$ is central. Now  if $r\in Z^{\sigma}$ then $r$
 commutes with each element of $R$ and with each variable $x_i$. Since $R$ is a finitely  generated $Z$-module then by
 Noether's theorem $R$ is a finitely generated $Z^{\sigma}$-module. So the claim is proved. Consider $A$ as a
 graded ring $A=\oplus_n A_n$ where $A_n$ is the span of all monomials  of a total
degree $n$. In particular $A_0=R$. Let $V$ be the graded ideal in $D=Z^{\sigma}[x_1^d,\ldots,
x_n^d]$ considered in \cite[Theorem 5.32]{Artamonov2}. Let $\wp $ be a maximal graded ideal in $D$
and $A_{\wp}^+$ the localization considered in  \cite[Definition 5.30]{Artamonov2}.  As it was
shown in  \cite[Corollaries 5.36]{Artamonov2} either $V$ contains all $x_1^d,\ldots,x_n^d$  or the
ring $A_{\wp}^+$ is a skew polynomial extension $A^+_{\wp}(0)[x_i,\alpha]$ for some $x_i$. In the
first case if each $x_i^d\in V$ then $P_{x_i^d}$ is extended from $R$ and $x_i^d$ is monic. In the
second case using the restriction on the rank of $P$   we can find a monic polynomial  $f$ in $x_i$
such that $P_f$ is extended from $R$. So in bother cases by \cite[Proposition 5.35]{Artamonov2} the
module $P$ is extended from $R$.
\end{proof}
\begin{theorem}[Quillen-Suslin theorem]\label{quillensuslinforore}
Let $K$ be a field and $A:=K[x_1,\dots,x_n;\sigma]$, with $\sigma$ bijective and having   finite
order. Then $A$ is $\mathcal{PF}$.
\end{theorem}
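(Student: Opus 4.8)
The plan is to reduce the theorem to the statement that $A$ is $\mathcal{E}$ and then to exhibit, rank by rank, that every finitely generated projective $A$-module is extended from $K$, using the Horrocks theorem above for the modules of rank at least $2$. First I would record that here ``extended from $K$'' is the same as ``$A$-free'': by Proposition \ref{8.1.3}, an extension $P\cong A\otimes_K P_0$ has $P_0\cong P/\langle X\rangle P$ a finite dimensional $K$-vector space, hence free, hence $P$ is $A$-free; and by Corollary \ref{APfIffRPfAextended} (with the field $R=K$, which is $\PF$) the ring $A$ is $\PF$ if and only if it is $\mathcal{E}$. So it suffices to prove that $A$ is $\mathcal{E}$; equivalently, by Corollary \ref{927} with the Noetherian, regular, $\PSF$ ring $R=K$, that $A$ is Hermite. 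I will therefore just show that every $P\in\pry(A)$ is $A$-free.

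Next I would observe that every $P\in\pry(A)$ is stably free, hence stably extended from $K$: the ring $A$ is connected $\mathbb{N}$-graded with degree-zero part the field $K$ (the total-degree grading in $x_1,\dots,x_n$ has homogeneous defining relations, and $A$ is Noetherian), so $K_0(A)\cong K_0(K)\cong\mathbb{Z}$, whence $[P]=\rk(P)\,[A]$ and $P\oplus A^m\cong A^{m+\rk(P)}=A\otimes_K K^{m+\rk(P)}$ for some $m$; this last module is extended from $K$. (Alternatively one may quote the $\PSF$ results of \cite{Artamonov2}.) Now suppose $\rk(P)\ge 2$. I would apply the Horrocks theorem proved above with $R:=K$: its hypotheses hold, since $K$ is a left regular domain, its center $K$ is Noetherian, $K$ is finitely generated over $K$, and $\sigma$ is an automorphism of $K$ of finite order --- the only point to watch is the requirement that this order be invertible in $K$, which is automatic when $\car K$ does not divide the order and otherwise calls for a small extra reduction isolating the $\car K$-primary part of $\sigma$. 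Since $P$ is stably extended from $K$ and has rank $\ge 2$, Horrocks gives that $P$ is extended from $K$, hence free.

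It remains to treat $\rk(P)\le 1$, and this is where I expect the real work to be. The case $\rk(P)=0$ is immediate: $A$ is a domain (an iterated Ore extension of the field $K$ by automorphisms), so a finitely generated projective module of rank $0$ vanishes. For $\rk(P)=1$ I would pass to $P\oplus A$, which is stably free of rank $2$, hence extended and therefore free by the previous paragraph, so that $P\oplus A\cong A\otimes_K\big((P\oplus A)/\langle X\rangle(P\oplus A)\big)\cong A^2$. What is then left is the cancellation $P\oplus A\cong A^2\Rightarrow P\cong A$ --- equivalently, the assertion that every rank-one finitely generated projective $A$-module is free, the non-commutative counterpart of the triviality of the Picard group of a polynomial ring over a field. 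I would try to derive it from the connected graded structure of $A$ (triviality of the ``Picard group'' of a connected $\mathbb{N}$-graded ring with $A_0$ a field), or else by descent along the finite central extension $F[x_1^d,\dots,x_n^d]\subseteq A$, with $F=K^{\sigma}$ and $d$ the order of $\sigma$; in the one-variable case this rank-one statement is trivial, since $K[x_1;\sigma]$ is a principal left ideal domain. Granting it, the steps above show that every $P\in\pry(A)$ is $A$-free, i.e.\ $A$ is $\PF$. The genuine obstacles are thus the rank-one (Picard) statement and, secondarily, the order-invertibility bookkeeping needed to invoke Horrocks.
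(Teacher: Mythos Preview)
Your approach is the same as the paper's: invoke the Horrocks theorem just proved with $R=Z=K$. The paper's entire proof is the single line ``Apply previous theorem with $R=Z=K$,'' so your proposal is considerably more detailed and, in fact, more honest about what still needs checking.

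You correctly flag two issues the paper passes over in silence. First, the Horrocks theorem as stated requires $\rk P\ge 2$, so ranks $0$ and $1$ need separate treatment; your rank-$0$ argument is fine, and your reduction of rank $1$ to the cancellation $P\oplus A\cong A^{2}\Rightarrow P\cong A$ is the natural move, but you do not prove it --- and neither does the paper. Second, Horrocks requires the order $d$ of $\sigma$ to be invertible in the center, which fails when $\car K$ divides $d$; your proposed ``small extra reduction isolating the $\car K$-primary part of $\sigma$'' is not carried out, and the paper simply ignores the hypothesis. Your verification that every $P\in\pry(A)$ is stably free (hence stably extended from $K$) via the connected-graded $K_0$ computation is a genuine ingredient that the one-line proof also needs but omits.

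In short: same route as the paper, but you have located the real gaps --- the rank-one cancellation and the invertibility of $d$ in $K$ --- that the paper's proof leaves open as well. Neither your proposal nor the paper closes them.
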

\begin{proof}
Apply previous theorem with $R=Z=K$.
\end{proof}





\begin{thebibliography}{200}




\bibitem{Artamonov}\textbf{Artamonov, V.},  \textit{Quantum polynomials},
WSPC Proceedings, 2008.

\bibitem{Artamonov2}\textbf{Artamonov, V.},  \textit{Serre's quantum problem},
Russian Math. Surveys, 53(4), 1998, 657-730.

\bibitem{Artamonov3}\textbf{Artamonov, V.},  \textit{On projective modules over quantum polynomials},
Journal of Mathematical Sciences, 93(2), 1999, 135-148.

\bibitem{Bass1} \textbf{Bass, H.}, \textit{Proyective modules over algebras},
Annals of Math. 73, 532-542, 1962.






\bibitem{Gallego3}\textbf{Gallego, C. and Lezama, O.}, \textit{Matrix approach to noncommutative
stably free modules and Hermite rings}, Algebra and Discrete Mathematics, 18 (1), 2014, 110-139.




\bibitem{Kunz}\textbf{Kunz, E.}, \textit{Introduction to Commutative Algebra and Algebraic Geometry}, Birkhäuser, 1985.

\bibitem{Lam}\textbf{Lam, T.Y.}, \textit{Serre's Problem on Projective Modules
}, Springer Monographs in Mathematics, Springer, 2006.




\bibitem{lezamareyes1}\textbf{Lezama, O. \& Reyes, M.}, {\em Some homological properties of skew $PBW$
extensions}, Comm. in Algebra, 42, (2014), 1200-1230.

\bibitem{lezamaore}\textbf{Lezama, O. et. al.}, \textit{Ore and Goldie theorems for skew $PBW$ extensions}, Asian-European J. Math. 06, (2013), 1350061
[20 pages].

\bibitem{McConnell}\textbf{McConnell, J. and Robson, J.}, \textit{Non-commutative Noetherian Rings},
Graduate Studies in Mathematics, AMS, 2001.



\end{thebibliography}
\end{document}